\renewcommand{\d}{\mathrm{d}}
\newcommand{\e}{\mathrm{e}}
\newcommand{\R}{\mathbb{R}}
\newcommand{\bx}{\bm{x}}
\newcommand{\hL}{\mathcal{L}}
\newcommand{\hN}{\mathcal{N}}
\newcommand{\nn}{\nonumber}
\newcommand{\dt}{{\tau}}
\newcommand{\eps}{\varepsilon}
\newcommand{\daoshu}[2]{\dfrac{\d #1}{\d #2}}
\newtheorem{example}{Example}[section]
\newtheorem{theorem}{Theorem}[section]
\newtheorem{lemma}[theorem]{Lemma}
\newtheorem{proposition}[theorem]{Proposition}
\newtheorem{remark}{Remark}[section]
\newtheorem{scheme}{Scheme}
\journal{Journal of Computational Physics}
\begin{document}

\begin{frontmatter}

\title{Maximum bound principle preserving integrating factor Runge--Kutta methods for semilinear parabolic equations}

%% Group authors per affiliation:
\author[usc]{Lili Ju}
\ead{ju@math.sc.edu}

\author[hkpu]{Xiao Li}
\ead{xiao1li@polyu.edu.hk}

\author[hkpu]{Zhonghua Qiao}
\ead{zhonghua.qiao@polyu.edu.hk}

\author[sust]{Jiang Yang}
\ead{yangj7@sustech.edu.cn}

\address[usc]{Department of Mathematics, University of South Carolina, Columbia, SC 29208, USA}
\address[hkpu]{Department of Applied Mathematics, The Hong Kong Polytechnic University, Hung Hom, Kowloon, Hong Kong}
\address[sust]{Department of Mathematics \& SUSTech International Center for Mathematics,
Southern University of Science and Technology, Shenzhen 518055, China}

\begin{abstract}
A large class of semilinear parabolic equations satisfy the maximum bound principle (MBP) in the sense that
the time-dependent solution preserves for any time a uniform pointwise bound  imposed by its initial and boundary conditions.
Investigation on numerical schemes of these equations with preservation of the MBP has attracted increasingly attentions in recent years,
especially for the  temporal discretizations.
In this paper, we study high-order MBP-preserving time integration schemes
by means of the integrating factor Runge--Kutta (IFRK) method.
Beginning with the space-discrete system of semilinear parabolic equations,
we present the  IFRK method in  general form and derive the sufficient conditions for the method to preserve the MBP.
%\blue{For these IFRK methods, we can eliminate the CFL restriction
%so that the constraint of time step sizes is contributed only from the nonlinear term.}
In particular,  we show that
the classic four-stage, fourth-order IFRK scheme is  MBP preserving  for some typical semilinear systems although not strong stability preserving, which can be instantly applied to the Allen--Cahn type of equations. In addition, error estimates for these numerical schemes are proved theoretically and verified numerically, as well as their efficiency  by  simulations of long-time evolutional behavior.
\end{abstract}

\begin{keyword}
maximum bound principle \sep
integrating factor Runge--Kutta method \sep
semilinear parabolic equation \sep
high-order numerical methods \sep
Allen--Cahn equations
\end{keyword}

%\MSC[2010] 35K55 \sep 65M12 \sep 65M15 \sep 65F30
\end{frontmatter}

\section{Introduction}
\label{sect_intro}

% Background and MBP

In this paper, we aim to study high-order time discretization methods
to preserve the maximum bound principle (MBP) of a class of semilinear parabolic equations taking the following form
\begin{equation}
\label{model_pde}
u_t = \hL u + \hN[u],
\end{equation}
where $\hL$ and $\hN$ are linear and nonlinear operators, respectively,
and $u=u(t,\bx)$ is the unknown function subject to appropriate initial and boundary conditions.
The MBP implies the existence of special upper and lower solutions in the sense that
the solution $u(t,x)$ preserves for any time a uniform pointwise bound imposed by the initial and boundary data.
Mathematical models for many practical problems have the form \eqref{model_pde}
and their solutions satisfy the MBP.
A typical example is the classic Allen--Cahn equation,
where the linear operator $\hL$ is the standard Laplace operator multiplied by a diffusion coefficient $\eps^2$
and the nonlinear part is given by a cubic polynomial $\hN[u]=u-u^3$.
It is well known \cite{EvSoSo92} that
the solution of the Allen--Cahn equation is pointwisely bounded by $1$ for any time
if the initial and boundary conditions are bounded by $1$,
which implies the MBP.
The Allen--Cahn equation was originally developed in \cite{AlCa79}
to model the motion of anti-phase boundaries in crystalline solids,
and nowadays, has been widely used as a fundamental model for phase-field (or diffuse-interface) methods
to study the interfacial motions and phase transitions in various application fields.

The MBP is an important physical feature
and is essential for numerical simulations to yield physically relevant solutions of many mathematical models.
%For example,
%the solution of the Allen--Cahn equation
%represents the difference between the volume fractions of two components,
%and thus should be evaluated in the interval $[-1,1]$.
Besides the solution of the Allen--Cahn equation, the density, concentration, or pressure in fluid flows must be nonnegative,
and the probability distribution should also be in the range $[0,1]$.
Hence, in numerical simulations,
it is highly expected that the numerical solutions preserve the MBP in the discrete sense.
%On one hand, the preservation of the MBP makes the numerical schemes more reliable,
%to avoid the nonphysical solutions, which
The violation of MBP may cause the ill-posedness of the problem and blow-ups of the numerical algorithms.
For instance, when there is a logarithmic term in the equation, e.g.,
the reaction-diffusion equation with the Flory--Huggins potential function
and the Peng--Robinson equation of state \cite{PeRo76,QiSu14},
a widely used realistic equation of state for hydrocarbon fluid in the petroleum industry.
On the other hand, in the view of numerical analysis,
the MBP suggests a type of strong stability in the supremum-norm sense
and guarantees the spatially uniform pointwise boundedness of the numerical solution.
Such a property facilitates the further numerical analysis of the numerical schemes, e.g., energy stability for phase-field models, since the locally Lipschitz continuous nonlinear term usually will become fully Lipschitz continuous thanks to the uniform pointwise bound. For instance,
%Energy stability is a common property of phase-field models,
%a family of important examples of equation \eqref{model_pde}
%given by the gradient flow with respect to an energy functional.
the Allen--Cahn equation could be viewed as the $L^2$ gradient flow associating with the energy
\begin{equation}
E(u) = \int \Big( \frac{\eps^2}{2} |\nabla u(\bx)|^2 + \frac{1}{4}(u^2(\bx)-1)^2 \Big) \, \d \bx,
\end{equation}
and the solution satisfies the energy stability
in the sense that the energy is non-increasing in time, namely,
$E(u(t_2))\le E(u(t_1))$ for any $t_2\ge t_1\ge 0$.
%Numerical schemes preserving the energy stability unconditionally
%allow large time step sizes in numerical simulations,
%and thus, the computational cost could be reduced significantly, especially in long-time simulations.
There have been a large variety of numerical schemes preserving such energy stability
successfully applied on different types of phase-field models, e.g., \cite{DuJuLiQi18,FeTaYa13,GuWaWi14,JuLiQiZh18,QiZhTa11,ShWaWaWi12,ShXuYa19,ShYa10b,WiWaLo09,XuTa06,Yang16}
and the references therein.
It was found that the MBP plays a key role to show such nonlinear energy stability \cite{DuJuLiQi19,HoTaYa17,ShTaYa16,TaYa16}.

% MBP-preserving schemes (SSI, CN, ETD, surfaceAC)

%(\textcolor{red}{all about Allen--Chan equations?})
Recently, MBP-preserving numerical schemes
have attracted increasingly attentions for semilinear parabolic equations in the form of \eqref{model_pde}.
In \cite{StVo15}, it was shown for the Allen--Cahn equation in one-dimensional case that
the MBP was preserved by the central difference semi-discrete scheme
and its fully discrete approximations with forward and backward Euler time-stepping methods.
%The stabilized semi-implicit (SSI) scheme is a type of widely used schemes for phase-field models.
Later, the discrete MBPs, as well as the energy stability,
of the first- and second-order stabilized semi-implicit (SSI) schemes with central difference method
were proved for the Allen--Cahn equation in \cite{HoLe20,ShTaYa16,TaYa16},
and those of the SSI schemes with finite element discretization in space
were obtained for the surface Allen--Cahn equation in \cite{XiFeYu17}.
The discrete MBPs were also obtained for the Allen--Cahn equation
in \cite{DuZh05} by first-order exponential time differencing (ETD) scheme in space-continuous setting,
in \cite{HoXiJi20} by second-order nonlinear implicit-explicit schemes with spatial central difference method,
and in \cite{YaDuZh18} by showing the uniform $L^p$ boundedness and passing the limit as $p$ goes to infinity.
In addition, MBP-preserving numerical schemes have been studied
for the nonlocal Allen--Cahn equation
by using first- and second-order ETD schemes \cite{DuJuLiQi19}
combined with a quadrature-based difference method \cite{DuTaTiYa19},
for the space-fractional Allen--Cahn equation
by using the Crank--Nicolson scheme \cite{HoTaYa17}
with central difference approximation \cite{TiZhDe15},
for the time-fractional Allen--Cahn equation
by using the convex splitting methods \cite{DuYaZh19},
and for the complex-valued Ginzburg--Landau model of superconductivity \cite{DuGuPe92}
by considering the finite volume method \cite{Du98,GaJuXi19}
and finite element method with the mass-lumping technique \cite{Du05} in space
with backward Euler time integration.
In  \cite{tangyang19},
Tang and Yang proposed so-called one-step monotone schemes to check
whether a scheme is an MBP-preserving scheme or not,
in which the regular third-order explicit strong stability-preserving (SSP) method \cite{GoShTa01}
is shown to be MBP-preserving for the Allen--Cahn equation with CFL condition as $\dt=\mathcal{O}(h^2)$.
An abstract framework on the MBP for equations like \eqref{model_pde}
was established in the recent work \cite{DuJuLiQi20review},
where sufficient conditions for linear and nonlinear operators
were given such that the equation satisfies the MBP
and the corresponding first- and second-order ETD schemes preserve the MBP.
Some details on the framework will be given in Section \ref{sect_mbp}.

% IF method, IFRK and SSP

%It should be noticed that
%all of the MBP-preserving numerical schemes mentioned above
%are up to second order in time for equations in the form of \eqref{model_pde}.
In this paper, we investigate high-order MBP-preserving time integration schemes.
The integrating factor Runge--Kutta (IFRK) method is used for the time integration,
which has been widely studied for stiff ordinary differential equations (ODEs) recently \cite{AhLi19,JuLiLe14,TaWaNi15}.
The IFRK method could be viewed as an extension of the conventional Runge--Kutta (RK) method,
particularly designed for the case that the problem contains linear part with strong stiffness.
The key idea is to use the exponential integrating factor to eliminate the stiff linear term
and apply the conventional RK method to the resulted system.
To study the stability of the IFRK method,
the concept of strong stability-preserving was proposed in \cite{ShOs88}
to construct efficient time discretization for hyperbolic conservation laws
and then further explored in \cite{GoSh98} for high-order schemes.
The SSP property means that the norm of the numerical solution diminishes.
More precisely, for an ODE system taking the form
\begin{equation}
\label{intro_ode}
\daoshu{u}{t} = Lu + N(u)
\end{equation}
with the matrix $L$ and the mapping $N$ satisfying
\begin{equation}
\label{intro_ode_L}
\|\e^{\omega L}\| \le 1, \quad \forall \, \omega > 0
\end{equation}
and, for some $\omega_0>0$,
\begin{equation}
\label{intro_ode_N}
\|u + \omega N(u)\| \le \|u\|, \quad \forall \, \omega\in(0,\omega_0],
\end{equation}
the IFRK method for \eqref{intro_ode} is called SSP
if its solution satisfies $\|u^{n+1}\|\le\|u^n\|$.
A review of the SSP-RK time discretization method was presented in \cite{GoShTa01}
for ODE systems like \eqref{intro_ode} with $L=0$,
which were often derived from spatial discretization of hyperbolic conservation law equations,
and a recent work \cite{IsGrGo18} generalized these results
to the general case of the SSP-IFRK method for \eqref{intro_ode}.
The general form of the SSP-RK method for \eqref{intro_ode} with $L=0$ is given by
\begin{align}
& u^{(i)} = \sum_{j=0}^{i-1} \left[\alpha_{ij} u^{(j)} + \dt\beta_{ij} N(u^{(j)})\right],
\quad 1\le i\le s,  \label{intro_ode_rk} \\
& \text{with } u^{(0)} = u^n, \ u^{n+1} = u^{(s)}, \nonumber
\end{align}
where $\alpha_{ij}$ are nonnegative.
Here, \eqref{intro_ode_rk} is actually a convex combination
of some forward Euler sub-steps with the step sizes $\dt\frac{\beta_{ij}}{\alpha_{ij}}$
and the nonnegativity of $\beta_{ij}$ is crucial to guarantee \eqref{intro_ode_N}.
It is concluded in \cite{GoSh98,GoShTa01} that,
under the constraint of the nonnegativity of $\beta_{ij}$,
there are no SSP-RK methods with order higher than four
and the number of stage for fourth-order SSP-RK methods cannot be lower than five.
%One can apply the idea in \cite{ShOs88} to replace $N$
%by some modification $\widetilde{N}$ satisfying $\|u-\omega\widetilde{N}(u)\|\le\|u\|$
%whenever $\beta_{ij}$ is negative,
%and then the SSP property is still guaranteed but extra computations for $\widetilde{N}(u)$ are needed.

% our contribution

The main contribution of this work includes three aspects.
First, we formulate the general results for the IFRK method preserving the MBP for equations like \eqref{model_pde}
under the abstract framework established in the recent work \cite{DuJuLiQi20review}.
For simplicity, we restrict our discussion in the space-discrete version
to avoid the abstract and tedious definitions of continuous function spaces and domains of operators.
Second, we give the error estimates of the numerical solution of the MBP-preserving IFRK method
by utilizing the uniform $L^\infty$ boundedness guaranteed by the MBP.
Third, we present three-stage, third-order and four-stage, fourth-order MBP-preserving IFRK schemes.
To the best of our knowledge,
this fills the gap of the lack of MBP-preserving numerical schemes with order higher than three.
The requirements on the time step size for preserving the MBP have the same magnitudes as the first-order IF scheme,
and are contributed only from the nonlinear term without the CFL restriction.
Numerical experiments also reflect the high efficiency of the four-stage, fourth-order IFRK scheme.

% organization

The rest of this paper is organized as follows.
In Section \ref{sect_mbp},
we briefly restate the sufficient conditions determined in \cite{DuJuLiQi20review}
for the linear and nonlinear operators
such that equation \eqref{model_pde} satisfies the MBP.
We also give the space-discrete equation of \eqref{model_pde}
and the corresponding conditions of the linear and nonlinear parts in the discrete setting.
Then, in Section \ref{sect_ifrk},
we present the IFRK method in the general form
and prove the preservation of the MBP and the error estimates of the method under some certain requirement on the time step size.
In particular, we present a four-stage, fourth-order IFRK scheme which is MBP preserving and give some simple examples of the space-discrete system.
In Section \ref{sect_numexp},
some numerical experiments are carried out for the Allen--Cahn equation with a logarithmic nonlinear term,
including the tests of convergence rate, MBP, and  efficiency for long-time simulations.
Finally, some concluding remarks are given in Section \ref{sect_con}.

\section{Maximum bound principle for semilinear parabolic equations}
\label{sect_mbp}

An abstract framework on the maximum bound principle
for a class of semilinear parabolic equations \eqref{model_pde}
was established in \cite{DuJuLiQi20review},
where sufficient conditions for the linear and nonlinear operators
were given such that equation \eqref{model_pde} satisfies the MBP.
For the completeness of the current paper,
we present some main results in \cite{DuJuLiQi20review}.
Denote by $\Omega$ the spatial domain as usual.

A crucial condition on the linear operator $\hL$ is the dissipativity in the sense that
if a function $w$ reaches its maximum on $\overline{\Omega}$ at a point $x_0\in\Omega$,
then it must hold $\hL w(\bx_0)\le0$.
By defining the function space $X$ appropriately,
this condition implies that $\hL$ is the generator of a contraction semigroup on $X$.
Such $\hL$ could be the standard Laplace operator, nonlocal diffusion operator \cite{DuGuLeZh12},
fractional Laplace operator \cite{NePaVa12} and so on.
The nonlinear operator $\hN$ is assumed to act as a composite function,
i.e., $\hN[w](\bx)=f(w(\bx))$ for any function $w$ and $\bx\in\overline{\Omega}$,
where $f$ is a one-variable continuously differentiable function satisfying
\begin{equation}
\label{assump_f}
f(\rho)\le 0\le f(-\rho), \quad \text{for some constant $\rho>0$.}
\end{equation}
The essential is the change of the sign of $f$ on both sides of zero.
Under these assumptions,
equation \eqref{model_pde} satisfies the MBP, that is,
if the absolute values of initial and boundary conditions are bounded by $\rho$,
then the absolute value of the entire solution is also bounded by $\rho$ pointwisely for all time.

Applying some type of spatial discretization to \eqref{model_pde},
one can obtain the space-discrete problem given by the ordinary differential equation (ODE) system
\begin{equation}
\label{model_eq}
\daoshu{u}{t} = Lu + f(u).
\end{equation}
Here, $u(t)=(u_1(t),u_2(t),\dots,u_m(t))^T\in\R^m$ denotes the space-discrete solution,
$L$ is an $m$-by-$m$ symmetric matrix derived from the spatial discretization of the linear operator $\hL$,
and the vector $f(u)$ with the $j$-th component $f(u_j)$ corresponds to the nonlinear term $\hN[u]$.
We denote by $\|\cdot\|_\infty$ the vector or matrix $\infty$-norm as usual.
The framework developed in \cite{DuJuLiQi20review} also consists of the space-discrete problem \eqref{model_eq}.
Therefore, we require that the matrix $L$ is the generator of a contraction semigroup on $\R^m$,
or equivalently,
\begin{equation}
\label{cond_L}
\|\e^{\omega L}\|_\infty \le 1, \quad \forall \, \omega > 0,
\end{equation}
which is identical to \eqref{intro_ode_L}.
For the nonlinear function $f$,
due to the assumption \eqref{assump_f}, i.e., the change of the sign of $f$ on both sides of zero,
the following condition holds:
\begin{equation}
\label{cond_f}
\text{$\exists\,\omega_0^+>0$ such that
$|\xi+\omega f(\xi)| \le \rho$, $\forall\,\xi \in [-\rho,\rho]$, $\forall\,\omega\in(0,\omega_0^+]$,}
\end{equation}
which is weaker than \eqref{intro_ode_N}.
Sometimes, we also need to further assume:
\begin{equation}
\label{cond_f2}
\text{$\exists\,\omega_0^->0$ such that
$|\xi-\omega f(\xi)| \le \rho$, $\forall\,\xi \in [-\rho,\rho]$, $\forall\,\omega\in(0,\omega_0^-]$.}
\end{equation}

\begin{remark}
The above two conditions \eqref{cond_f}-\eqref{cond_f2} are very crucial to remove the nonnegativity requirement of the coefficients $\beta_{ij}$s
as we mentioned in the standard SSP-RK schemes \eqref{intro_ode_rk}. This allows us to show that the three-stage, third-order and four-stage, fourth-order IFRKs are actually MBP-preserving for the model equation \eqref{model_eq} satisfying \eqref{assump_f},  which will be demonstrated in the following section.
\end{remark}

\section{Integrating factor Runge--Kutta method}
\label{sect_ifrk}

In this section,
we will present a family of IFRK schemes for time-stepping of the space-discrete system \eqref{model_eq}.
The method is based on the Runge--Kutta time discretizations
combined with the exponential integrating factor.
We will show the MBP-preserving property under the conditions \eqref{cond_L}--\eqref{cond_f2}
and the error estimates of the numerical solutions.

\subsection{MBP-preserving IFRK method in general form}

We have claimed that $L$ in \eqref{model_eq} is an $m$-by-$m$ matrix
corresponding to the spatial discretization of the linear operator $\hL$.
Premultiplying the system \eqref{model_eq} by $\e^{-tL}$, we have
\[
\daoshu{(\e^{-tL}u)}{t} = \e^{-tL} f(u).
\]
Defining a transformation of variables by $w = \e^{-tL} u$,
we obtain a new ODE system
\begin{equation}
\label{model_eq_w}
\daoshu{w}{t} = \e^{-tL} f(\e^{tL}w) =: G(t,w).
\end{equation}
The general explicit $s$-stage Runge--Kutta method for \eqref{model_eq_w} is given by \cite{ShOs88}
\begin{subequations}
\label{wODE_RK_general}
\begin{align}
w^{(0)} & = w^n,\\
w^{(i)} & = w^{(0)}+\dt\sum_{j=0}^{i-1}d_{ij}G(t_n+c_j\dt,w^{(j)}),\quad 1\le i\le s, \label{wODE_RK_general2} \\
w^{n+1} & = w^{(s)},
\end{align}
\end{subequations}
where $c_0=0$, $c_i=\sum\limits_{j=0}^{i-1}d_{ij}$ for $1\le i\le s$, %with $d_{ij}\ge0$
and $c_s=1$ for consistency.
For $\alpha_{ij}\ge0$ to be determined such that $\sum\limits_{j=0}^{i-1}\alpha_{ij}=1$,
we rewrite \eqref{wODE_RK_general2} as
\begin{equation}
\label{wODE_RK_euler}
w^{(i)} = \sum_{j=0}^{i-1}[\alpha_{ij}w^{(j)}+\dt\beta_{ij}G(t_n^{(j)},w^{(j)})],\quad 1\le i\le s,
\end{equation}
where $\beta_{ij}=d_{ij}-\sum\limits_{k=j+1}^{i-1}\alpha_{ik}d_{kj}$ and $t_n^{(j)}=t_n+c_j\dt$.
If we require that $\beta_{ij}=0$  when its corresponding $\alpha_{ij}=0$, thus
\eqref{wODE_RK_euler} is a convex combination of a group  of forward Euler substeps
$$ w^{(j)}+\dt\frac{\beta_{ij}}{\alpha_{ij}}G(t_n^{(j)},w^{(j)}).$$
Define $u^n=\e^{t_nL}w^n$ and $u^{(i)}=\e^{t_n^{(i)}L}w^{(i)}$, $0\le i\le s$,
then \eqref{wODE_RK_euler} becomes
%Transforming the variable $w$ back to $u$, \eqref{wODE_RK_euler} becomes
\begin{equation*}
%\label{uODE_RK_euler}
u^{(i)} = \sum_{j=0}^{i-1} \e^{(c_i-c_j)\dt L} [\alpha_{ij} u^{(j)} + \dt\beta_{ij} f(u^{(j)})],
\quad 1\le i\le s,
\end{equation*}
which can be viewed as
a convex combination of the exponential forward Euler substeps
\[
\e^{(c_i-c_j)\dt L} \Big[u^{(j)}+\dt\frac{\beta_{ij}}{\alpha_{ij}}f(u^{(j)})\Big].
\]
Now, we obtain the following $s$-stage IFRK method for \eqref{model_eq}:
\begin{subequations}
\label{model_ODE_IFRK}
\begin{align}
u^{(0)} & = u^n,\\
u^{(i)} & = \sum_{j=0}^{i-1} \e^{(c_i-c_j)\dt L} [\alpha_{ij} u^{(j)} + \dt\beta_{ij} f(u^{(j)})],
\quad 1\le i\le s,\\
u^{n+1} & = u^{(s)}.
\end{align}
\end{subequations}
The main result on the MBP-preserving property of the method \eqref{model_ODE_IFRK} is as follows.

\begin{theorem}
\label{thm_MBP}
Given a linear operator $L$ satisfying \eqref{cond_L},
a function $f$ satisfying \eqref{cond_f} and \eqref{cond_f2},
and the abscissas $\{c_j\}$ satisfying
\begin{equation}
\label{cond_cc}
c_0 \le c_1 \le \cdots \le c_s,
\end{equation}
if $\|u^n\|_\infty\le\rho$,
then $u^{n+1}$ obtained from \eqref{model_ODE_IFRK} satisfies $\|u^{n+1}\|_\infty \le \rho$,
provided that the time step size satisfies
\begin{equation}
\label{cond timestep}
\dt \le \mathcal{C}\omega_0^+, \quad \text{with } \mathcal{C}=\min_{i,j}\frac{\alpha_{ij}}{\beta_{ij}}
\end{equation}
when $\beta_{ij}$ are all nonnegative,
or satisfies both \eqref{cond timestep} and
\begin{equation}
\label{cond timestep2}
\dt \le \mathcal{C}\omega_0^-, \quad \text{with } \mathcal{C}=\min_{i,j}\frac{\alpha_{ij}}{|\beta_{ij}|}
\end{equation}
whenever there is a negative $\beta_{ij}$.
\end{theorem}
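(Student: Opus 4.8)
The plan is to argue by induction on the stage index $i$, establishing that $\|u^{(i)}\|_\infty \le \rho$ for every $0 \le i \le s$; the conclusion $\|u^{n+1}\|_\infty = \|u^{(s)}\|_\infty \le \rho$ then follows immediately. The base case $\|u^{(0)}\|_\infty = \|u^n\|_\infty \le \rho$ is exactly the hypothesis. For the inductive step I would assume $\|u^{(j)}\|_\infty \le \rho$ for all $0 \le j \le i-1$ and exploit the structural identity derived just above the theorem, namely that each stage of \eqref{model_ODE_IFRK} is a convex combination
\begin{equation*}
u^{(i)} = \sum_{j=0}^{i-1} \alpha_{ij}\, \e^{(c_i-c_j)\dt L} \Big[u^{(j)} + \dt\tfrac{\beta_{ij}}{\alpha_{ij}} f(u^{(j)})\Big]
\end{equation*}
of exponential forward Euler substeps, with $\alpha_{ij} \ge 0$ and $\sum_{j=0}^{i-1} \alpha_{ij} = 1$. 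By the triangle inequality it then suffices to bound the $\infty$-norm of each substep by $\rho$ and sum against the convex weights, giving $\|u^{(i)}\|_\infty \le \big(\sum_j \alpha_{ij}\big)\rho = \rho$.

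To bound a single substep I would first peel off the integrating factor. Since the abscissas are nondecreasing by \eqref{cond_cc}, we have $c_i - c_j \ge 0$ for every $j \le i-1$ (this is exactly where the ordering hypothesis is indispensable), so $(c_i-c_j)\dt \ge 0$ and \eqref{cond_L} yields $\|\e^{(c_i-c_j)\dt L}\|_\infty \le 1$. The task reduces to showing $\big\|u^{(j)} + \dt\tfrac{\beta_{ij}}{\alpha_{ij}} f(u^{(j)})\big\|_\infty \le \rho$, which I would verify componentwise. Writing $\omega = \dt\,|\beta_{ij}|/\alpha_{ij}$, the step-size restriction guarantees $\omega \le \omega_0^+$ in the case $\beta_{ij} > 0$ (from \eqref{cond timestep}, since $\dt \le \tfrac{\alpha_{ij}}{\beta_{ij}}\omega_0^+$) and $\omega \le \omega_0^-$ in the case $\beta_{ij} < 0$ (from \eqref{cond timestep2}). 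Because $|u^{(j)}_k| \le \rho$ for each component $k$ by the inductive hypothesis, applying \eqref{cond_f} to every component when $\beta_{ij}>0$ and \eqref{cond_f2} when $\beta_{ij}<0$ delivers the pointwise bound by $\rho$, closing the induction.

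The step I expect to demand the most care is the bookkeeping for the degenerate coefficients rather than any deep estimate. When $\alpha_{ij} = 0$ the corresponding substep does not appear—the convention $\beta_{ij}=0$ whenever $\alpha_{ij}=0$, imposed just before \eqref{model_ODE_IFRK}, removes it—so the convex-combination reading remains legitimate; and when $\beta_{ij} = 0$ the substep is merely $\e^{(c_i-c_j)\dt L} u^{(j)}$, whose norm is already controlled by \eqref{cond_L} alone, so such indices must be excluded from the minima defining $\mathcal{C}$ in \eqref{cond timestep}--\eqref{cond timestep2} (otherwise the ratios are undefined). The only remaining subtlety is organizational: the sign of each $\beta_{ij}$ dictates whether \eqref{cond_f} or \eqref{cond_f2} is invoked, which is precisely why the extra condition \eqref{cond_f2} becomes necessary exactly when some $\beta_{ij}$ is negative, matching the dichotomy in the statement.
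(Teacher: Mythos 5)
Your proposal is correct and follows essentially the same route as the paper's proof: induction over the stages, the triangle inequality on the convex combination, the bound $\|\e^{(c_i-c_j)\dt L}\|_\infty\le 1$ from \eqref{cond_L} together with the non-decreasing abscissas, and the componentwise application of \eqref{cond_f} or \eqref{cond_f2} according to the sign of $\beta_{ij}$. Your extra remarks on the degenerate indices with $\alpha_{ij}=0$ or $\beta_{ij}=0$ are a sensible clarification that the paper leaves implicit, but they do not change the argument.
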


\begin{proof}
For each stage of \eqref{model_ODE_IFRK}, suppose $\|u^{(j)}\|_\infty\le\rho$ for all $j\le i-1$.
Then, we have
\begin{align*}
\|u^{(i)}\|_\infty
& \le \sum_{j=0}^{i-1} \|\e^{(c_i-c_j)\dt L} [\alpha_{ij} u^{(j)} + \dt\beta_{ij} f(u^{(j)})]\|_\infty\\
& \le \sum_{j=0}^{i-1}\alpha_{ij}\|\e^{(c_i-c_j)\dt L}\|_\infty
\Big\|u^{(j)} + \dt\frac{\beta_{ij}}{\alpha_{ij}} f(u^{(j)})\Big\|_\infty\\
& \le \sum_{j=0}^{i-1}\alpha_{ij} \cdot \rho\\
& = \rho,
\end{align*}
since $c_i-c_j\ge0$,
and $\dt \max\limits_{i,j}\frac{\beta_{ij}}{\alpha_{ij}}\le\omega_0^+$
or $\dt \max\limits_{i,j}\frac{|\beta_{ij}|}{\alpha_{ij}}\le\min\{\omega_0^+,\omega_0^-\}$.
By induction, we obtain $\|u^{(i)}\|_\infty\le\rho$ for each $i$, and thus $\|u^{n+1}\|\le\rho$.
\end{proof}

\begin{remark}
Condition \eqref{cond_cc} implies the property of \emph{non-decreasing abscissas},
which is crucial for the preservation of the MBP for the IFRK method \eqref{model_ODE_IFRK}.
\end{remark}

\subsection{Error estimate}

%The form \eqref{wODE_RK_general} of the RK method is usually called the Butcher form
%and the corresponding Butcher table reads \cite{HairerNoWa93}
%\[
%\begin{tabular}{c|ccccc}
%$c_0$ & $0$ & $0$ & $0$ & $\cdots$ & $0$ \\
%$c_1$ & $d_{10}$ & $0$ & $0$ & $\cdots$ & $0$ \\
%$c_2$ & $d_{20}$ & $d_{21}$ & $0$ & $\ddots$ & $\vdots$ \\
%$\vdots$ & $\vdots$ & $\vdots$ & $\ddots$ & $\ddots$ & $0$ \\
%$c_{s-1}$ & $d_{s-1,0}$ & $d_{s-1,1}$ & $\cdots$ & $d_{s-1,s-2}$ & $0$ \\
%\hline
%$c_s$ & $d_{s0}$ & $d_{s1}$ & $\cdots$ & $d_{s,s-2}$ & $d_{s,s-1}$
%\end{tabular}
%\]
%where $c_0=0$ and $c_s=1$ for consistency.
To carry out convergence analysis for the IFRK method \eqref{model_ODE_IFRK},
%we need to transform it back to the Butcher form
%(or equivalently, transform the variable $w$ in \eqref{wODE_RK_general} back to $u$):
we transform the variable $w$ in \eqref{wODE_RK_general} back to $u$ to get
\begin{subequations}
\label{IFRK_Butcher}
\begin{align}
u^{(0)} & = u^n,\\
u^{(i)} & = \e^{c_i\dt L} u^n + \dt \sum_{j=0}^{i-1} d_{ij} \e^{(c_i-c_j)\dt L} f(u^{(j)}),
\quad 1\le i\le s-1, \label{IFRK_Butcher2} \\
u^{n+1} & = \e^{\dt L} u^n + \dt \sum_{i=0}^{s-1} d_{si} \e^{(1-c_i)\dt L} f(u^{(i)}). \label{IFRK_Butcher3}
\end{align}
\end{subequations}
For simplicity, we do not introduce the general order conditions for arbitrary-order RK methods (see, e.g., \cite{HairerNoWa93}).
Instead, we suppose directly that the RK method \eqref{wODE_RK_general} is $p$-th order, where $1\le p\le s$.
Then, we have the following error estimate for \eqref{IFRK_Butcher}.

\begin{theorem}
Given $T>0$,
suppose that the exact solution, denoted by $u_e(t)$, of \eqref{model_eq} is sufficiently smooth on $[0,T]$
and $f$ is $p$-times continuously differentiable on $[-\rho,\rho]$.
Under the conditions of Theorem \ref{thm_MBP},
if the time step size $\dt$ satisfies \eqref{cond timestep} and \eqref{cond timestep2}, then
the numerical solution $u^n$ generated by the IFRK method \eqref{IFRK_Butcher}
with $u^0=u_e(0)$ and $\|u_e(0)\|_\infty\le\rho$
satisfies the error estimate:
\[
\|u_e(t_n)-u^n\|_\infty \le C(\e^{F_1st_n}-1)\dt^p, \quad t_n\le T,
\]
where $F_1=\max\limits_{|\xi|\le\rho}|f'(\xi)|$ and
the constant $C>0$ %depends on the $C^{p+1}[0,T]$-norm of $u_e$,
%the $C^p[-\rho,\rho]$-norm of $f$, $\|L\|_\infty$, $T$, $p$, and $s$,
is independent of $\dt$.
\end{theorem}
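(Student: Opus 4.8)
The plan is to run a standard one-step-error plus discrete-Gronwall argument, but carried out entirely in the original variable $u$ using the Butcher form \eqref{IFRK_Butcher}, so that every matrix exponential appearing has a \emph{nonnegative} exponent and is therefore an $\infty$-norm contraction by \eqref{cond_L}. This is exactly why the non-decreasing abscissas condition \eqref{cond_cc} (which gives $c_i-c_j\ge0$ and $c_i\in[0,1]$) is needed here too: the transformed variable $w$ in \eqref{model_eq_w} is unsuitable for the estimate because $\e^{-tL}$ need not be a contraction, whereas every factor $\e^{c_i\dt L}$, $\e^{(c_i-c_j)\dt L}$, $\e^{(1-c_i)\dt L}$ in \eqref{IFRK_Butcher} is.

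First I would introduce the exact internal stages: running \eqref{IFRK_Butcher} one step starting from the exact value $u_e(t_n)$ produces stages $U^{(i)}$ and an intermediate output $\bar u^{n+1}$. Since $\|u_e(t_n)\|_\infty\le\rho$ by the MBP of \eqref{model_eq} and the step size obeys \eqref{cond timestep}--\eqref{cond timestep2}, the proof of Theorem \ref{thm_MBP} applies verbatim to give $\|U^{(i)}\|_\infty\le\rho$ for every $i$; the numerical stages $u^{(i)}$ likewise remain in $[-\rho,\rho]$ under the inductive hypothesis $\|u^n\|_\infty\le\rho$. This uniform confinement to $[-\rho,\rho]$ is the heart of the matter: it lets me replace the merely locally Lipschitz $f$ by the uniform bound $\|f(v)-f(\tilde v)\|_\infty\le F_1\|v-\tilde v\|_\infty$ for $v,\tilde v\in[-\rho,\rho]^m$, with $F_1=\max_{|\xi|\le\rho}|f'(\xi)|<\infty$ (finite since $f$ is $C^1$ on $[-\rho,\rho]$), applied to every nonlinear difference that occurs. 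Without the MBP this replacement would be unavailable.

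The two ingredients of the estimate then follow. For consistency, since the underlying RK method for \eqref{model_eq_w} is $p$-th order and $w_e(t)=\e^{-tL}u_e(t)$ is smooth on $[0,T]$ (the required derivatives of $G$ existing because $f$ is $C^p$ on $[-\rho,\rho]$ and the trajectory stays there), the local truncation error satisfies $\|u_e(t_{n+1})-\bar u^{n+1}\|_\infty\le C_0\dt^{p+1}$ with $C_0$ independent of $\dt$. For stability, I would subtract the numerical from the exact stage relations, set $e^n=u_e(t_n)-u^n$ and $\delta^{(i)}=U^{(i)}-u^{(i)}$, take $\infty$-norms, and use $\|\e^{\omega L}\|_\infty\le1$ together with the Lipschitz bound to get $\|\delta^{(i)}\|_\infty\le\|e^n\|_\infty+\dt F_1\sum_{j<i}|d_{ij}|\,\|\delta^{(j)}\|_\infty$. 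Resolving this triangular recursion by induction over the $s$ stages yields $\|\delta^{(i)}\|_\infty\le(1+C_1\dt)\|e^n\|_\infty$, and inserting it into the update line \eqref{IFRK_Butcher3} produces a one-step inequality $\|e^{n+1}\|_\infty\le(1+F_1 s\dt)\|e^n\|_\infty+C_0'\dt^{p+1}$, where the factor $s$ records the accumulation of the single Lipschitz constant $F_1$ across the $s$ stages.

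Finally I would close with the discrete Gronwall lemma. Since $e^0=0$ (as $u^0=u_e(0)$), geometric summation gives $\|e^n\|_\infty\le C_0'\dt^{p+1}\sum_{k=0}^{n-1}(1+F_1 s\dt)^k=\frac{C_0'}{F_1 s}\dt^p\big((1+F_1 s\dt)^n-1\big)$, and bounding $(1+F_1 s\dt)^n\le\e^{F_1 s n\dt}=\e^{F_1 s t_n}$ yields precisely $\|u_e(t_n)-u^n\|_\infty\le C(\e^{F_1 s t_n}-1)\dt^p$ with $C=C_0'/(F_1 s)$ independent of $\dt$. The main obstacle is not the Gronwall machinery, which is routine, but the first step: securing the uniform $[-\rho,\rho]$ bound for \emph{both} the exact and the numerical internal stages so that the one constant $F_1$ controls every nonlinear difference. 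This is exactly where the MBP of Theorem \ref{thm_MBP} is indispensable and where the whole argument would break down for a globally non-Lipschitz nonlinearity such as the Allen--Cahn cubic.
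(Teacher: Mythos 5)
Your proposal is correct and follows essentially the same route as the paper: introduce reference stages launched from $u_e(t_n)$, use the MBP (via Theorem \ref{thm_MBP}) to confine all stages to $[-\rho,\rho]$ so that the single Lipschitz constant $F_1$ applies, exploit the contractivity of the matrix exponentials with nonnegative exponents, bound the truncation error by $\mathcal{O}(\dt^{p+1})$, and close with a discrete Gronwall argument. The only cosmetic difference is that the paper's one-step stability factor is $(1+F_1\dt)^s$ rather than your $(1+F_1 s\dt)$, which leads to the same final estimate.
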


\begin{proof}
Following \cite{DuJuLu19,Ying00},
let us introduce the reference functions $v^{(i)}$ satisfying
\begin{subequations}
\begin{align}
v^{(0)} & = u_e(t_n), \label{IFRK_Butcher_v1} \\
v^{(i)} & = \e^{c_i\dt L} u_e(t_n) + \dt \sum_{j=0}^{i-1} d_{ij} \e^{(c_i-c_j)\dt L} f(v^{(j)}),
\quad 1\le i\le s-1, \label{IFRK_Butcher_v2} \\
u_e(t_{n+1}) & = \e^{\dt L} u_e(t_n) + \dt \sum_{i=0}^{s-1} d_{si} \e^{(1-c_i)\dt L} f(v^{(i)}) + \dt R^n, \label{IFRK_Butcher_v3}
\end{align}
\end{subequations}
where the truncation error $R^n$ satisfies
\begin{equation}
\label{thm_error_pf0}
\max_{0\le n\le [T/\dt]} \|R^n\|_\infty \le \widetilde{C} \dt^p,
\end{equation}
where the constant $\widetilde{C}>0$ depends on the $C^{p+1}[0,T]$-norm of $u_e$,
the $C^p[-\rho,\rho]$-norm of $f$, $\|L\|_\infty$, $T$, $p$, and $s$,
but is independent of $\dt$.
Note that $\|u_e(t)\|_\infty\le\rho$ for any $t\in[0,T]$ due to the MBP of \eqref{model_eq}.
According to \eqref{IFRK_Butcher_v1} and \eqref{IFRK_Butcher_v2},
we know from the proof of Theorem \ref{thm_MBP} that $\|v^{(i)}\|_\infty\le\rho$ for each $i=1,2,\dots,s-1$.
Let $e^n=u_e(t_n)-u^n$ and $e^{(i)}=v^{(i)}-u^{(i)}$ for $i=0,1,\dots,s-1$.

For each $i=1,2,\dots,s-1$, the difference between \eqref{IFRK_Butcher_v2} and \eqref{IFRK_Butcher2} gives
\[
e^{(i)} = \e^{c_i\dt L} e^n + \dt \sum_{j=0}^{i-1} d_{ij} \e^{(c_i-c_j)\dt L} [f(v^{(j)}) - f(u^{(j)})].
\]
Using \eqref{cond_L} and noting $d_{ij}\le c_i\le 1$, we obtain
\[
\|e^{(i)}\|_\infty
\le \|e^n\|_\infty + \dt \sum_{j=0}^{i-1} \|f(v^{(j)}) - f(u^{(j)})\|_\infty
\le \|e^n\|_\infty + F_1 \dt \sum_{j=0}^{i-1} \|e^{(j)}\|_\infty.
\]
By induction, assuming $\|e^{(j)}\|_\infty\le(1+F_1\dt)^j\|e^n\|_\infty$ for $j=0,1,\dots,i-1$,
we obtain
\begin{equation}
\label{thm_error_pf1}
\|e^{(i)}\|_\infty
\le \|e^n\|_\infty + F_1 \dt \sum_{j=0}^{i-1} (1+F_1\dt)^j \|e^n\|_\infty
= (1+F_1\dt)^i \|e^n\|_\infty.
\end{equation}
Therefore, the inequality \eqref{thm_error_pf1} holds for any $i=0,1,2,\dots,s-1$.

Similarly, the difference between \eqref{IFRK_Butcher_v3} and \eqref{IFRK_Butcher3} leads to
\[
e^{n+1} = \e^{\dt L} e^n + \dt \sum_{i=0}^{s-1} d_{si} \e^{(1-c_i)\dt L} [f(v^{(i)}) - f(u^{(i)})] + \dt R^n,
\]
and then, using \eqref{thm_error_pf1} and \eqref{thm_error_pf0}, it yields
\begin{align*}
\|e^{n+1}\|_\infty
& \le \|e^n\|_\infty + F_1 \dt \sum_{i=0}^{s-1} \|e^{(i)}\|_\infty + \dt \|R^n\|_\infty \\
& \le \|e^n\|_\infty + F_1 \dt \sum_{i=0}^{s-1} (1+F_1\dt)^i \|e^n\|_\infty + \widetilde{C} \dt^{p+1} \\
& = (1+F_1\dt)^s \|e^n\|_\infty + \widetilde{C} \dt^{p+1}.
\end{align*}
By recursion, we obtain
\[
\|e^n\|_\infty \le (1+F_1\dt)^{ns} \|e^0\|_\infty + \widetilde{C} \dt^{p+1} \sum_{k=0}^{n-1} (1+F_1\dt)^{ks}.
\]
Noting that $e^0=0$ and denoting $C=\widetilde{C}(F_1s)^{-1}$, we have
\[
\|e^n\|_\infty \le C \dt^{p} [(1+F_1\dt)^{ns}-1] \le C (\e^{F_1st_n}-1) \dt^{p},
\]
which completes the proof.
\end{proof}

\subsection{Various MBP-preserving IFRK schemes}
\label{sect_IFRKschemes}

We have shown the MBP-preserving property for the IFRK method in the general form.
Now, we present some concrete and practical MBP-preserving IFRK schemes
under the general results established above.

\begin{scheme}[IF1]
The first-order integrating factor (IF1) scheme
for solving \eqref{model_eq} reads \cite{IsGrGo18}
\begin{equation}
\label{IFFE}
u^{n+1} = \e^{\dt L} [u^n + \dt f(u^n)].
\end{equation}
Here, $\beta_{ij}>0$ and $\mathcal{C}=1$.
Thus, the condition \eqref{cond timestep} becomes
\begin{equation}
\label{cond_timestep_IF1}
\dt \le \omega_0^+.
\end{equation}
\end{scheme}

\begin{scheme}[IFRK2]
A second-order integrating factor Runge--Kutta (IFRK2) scheme
for solving \eqref{model_eq} reads \cite{IsGrGo18}
\begin{subequations}
\label{IFRK2}
\begin{align}
u^{(1)} & = \e^{\dt L} [u^n + \dt f(u^n)] \nn \\
& = \e^{\dt L} u^n + \dt \e^{\dt L} f(u^n), \\
u^{n+1} & = \frac{1}{2} \e^{\dt L} u^n + \frac{1}{2} [u^{(1)} + \dt f(u^{(1)})] \nn \\
& = \e^{\dt L} u^n + \dt\Big(\frac{1}{2}\e^{\dt L} f(u^n) + \frac{1}{2} f(u^{(1)})\Big).
\end{align}
\end{subequations}
Here, $\beta_{ij}\ge0$ and $\mathcal{C}=1$.
Thus, the condition \eqref{cond timestep} is the same as \eqref{cond_timestep_IF1}.
%becomes
%\begin{equation}
%\label{cond_timestep_IFRK2}
%\dt \le \omega_0^+.
%\end{equation}
\end{scheme}

\begin{scheme}[IFRK3]
A third-order integrating factor Runge--Kutta (IFRK3) scheme
for solving \eqref{model_eq} reads \cite{IsGrGo18}
\begin{subequations}
\label{IFRK3}
\begin{align}
u^{(1)} & = \frac{1}{2} \e^{\frac{2\dt}{3} L}u^n
+ \frac{1}{2} \e^{\frac{2\dt}{3} L} \Big[u^n + \frac{4\dt}{3}f(u^n)\Big] \nn \\
& = \e^{\frac{2\dt}{3}L} u^n + \frac{2\dt}{3} \e^{\frac{2\dt}{3}L} f(u^n),\\
u^{(2)} & = \frac{2}{3} \e^{\frac{2\dt}{3}L}u^n
+ \frac{1}{3} \Big[u^{(1)} + \frac{4\dt}{3}f(u^{(1)})\Big] \nn \\
& = \e^{\frac{2\dt}{3}L}u^n
+ \frac{2\dt}{3} \Big(\frac{1}{3}\e^{\frac{2\dt}{3}L}f(u^n)+\frac{2}{3}f(u^{(1)})\Big), \\
u^{n+1} & = \frac{59}{128}\e^{\dt L}u^n + \frac{15}{128}\e^{\dt L} \Big[u^n + \frac{4\dt}{3}f(u^n)\Big]
+ \frac{27}{64}\e^{\frac{\dt}{3}L} \Big[u^{(2)} + \frac{4\dt}{3}f(u^{(2)})\Big] \nn \\
& = \e^{\dt L}u^n
+ \dt\Big(\frac{4}{16}\e^{\dt L}f(u^n) + \frac{3}{16}\e^{\frac{\dt}{3}L}f(u^{(1)}) + \frac{9}{16}\e^{\frac{\dt}{3}L}f(u^{(2)})\Big).
\end{align}
\end{subequations}
Here, $\beta_{ij}\ge0$ and $\mathcal{C}=\dfrac{3}{4}$.
Thus, the condition \eqref{cond timestep} leads to
\begin{equation}
\label{cond_timestep_IFRK3}
\dt \le \frac{3\omega_0^+}{4}.
\end{equation}
\end{scheme}

\begin{remark}
\label{rmk_ShuOsher}
Applying the third-order Shu--Osher method \cite{ShOs88} to \eqref{model_eq_w} gives
\begin{subequations}
\label{IFRK3n}
\begin{align}
u^{(1)} & = \e^{\dt L} [u^n + \dt f(u^n)] \nn \\
& = \e^{\dt L} u^n + \dt \e^{\dt L} f(u^n), \\
u^{(2)} & = \frac{3}{4} \e^{\frac{\dt}{2}L}u^n + \frac{1}{4} \e^{-\frac{\dt}{2}L} [u^{(1)} + \dt f(u^{(1)})] \nn \\
& = \e^{\frac{\dt}{2}L}u^n
+ \frac{\dt}{2} \Big(\frac{1}{2}\e^{\frac{\dt}{2}L} f(u^n) + \frac{1}{2}\e^{-\frac{\dt}{2}L} f(u^{(1)})\Big), \\
u^{n+1} & = \frac{1}{3}\e^{\dt L}u^n + \frac{2}{3}\e^{\frac{\dt}{2}L}[u^{(2)}+\dt f(u^{(2)})] \nn \\
& = \e^{\dt L}u^n
+ \dt \Big(\frac{1}{6}\e^{\dt L}f(u^n) + \frac{2}{3}\e^{\frac{\dt}{2}L}f(u^{(2)}) + \frac{1}{6}f(u^{(1)})\Big),
\end{align}
\end{subequations}
which gives $\beta_{ij}\ge0$ and $\mathcal{C}=1$.
However, there is a matrix exponential with a negative coefficient,
so this scheme may not preserve the MBP.
We will show in Section \ref{sect_numexp} that
the scheme \eqref{IFRK3n} does not preserve the MBP
even though a small time step size is used.
This suggests the necessity of the property of non-decreasing abscissas.
\end{remark}

Apart from the three-stage IFRK3 scheme \eqref{IFRK3},
one can give more MBP-preserving third-order IFRK schemes
by combining the exponential integrating factor approach with the RK method with non-decreasing abscissas,
e.g., eSSPRK$^+$(4,3) in \cite{IsGrGo18}.

\begin{scheme}[IFRK4]
Applying the classic fourth-order Runge--Kutta method to \eqref{model_eq_w} gives
the fourth-order integrating factor Runge--Kutta (IFRK4) scheme:
\begin{subequations}
\label{IFRK4}
\begin{align}
u^{(1)} & = \e^{\frac{\dt}{2}L} \Big[u^n + \frac{\dt}{2} f(u^n)\Big] \nn \\
& = \e^{\frac{\dt}{2}L} u^n + \frac{\dt}{2} \e^{\frac{\dt}{2}L} f(u^n), \\
u^{(2)} & = \frac{1}{2} \e^{\frac{\dt}{2}L} \Big[u^n - \frac{\dt}{2} f(u^n)\Big]
+ \frac{1}{2} [u^{(1)} + \dt f(u^{(1)})] \nn \\
& = \e^{\frac{\dt}{2}L} u^n + \frac{\dt}{2} f(u^{(1)}), \\
u^{(3)} & = \frac{1}{9} \e^{\dt L} [u^n - \dt f(u^n)]
+ \frac{2}{9} \e^{\frac{\dt}{2}L} \Big[u^{(1)} - \frac{3\dt}{2} f(u^{(1)})\Big]
+ \frac{2}{3} \e^{\frac{\dt}{2}L} \Big[u^{(2)} + \frac{3\dt}{2} f(u^{(2)})\Big] \nn \\
& = \e^{\dt L} u^n + \dt \e^{\frac{\dt}{2}L} f(u^{(2)}), \\
u^{n+1} & = \frac{1}{3} \e^{\frac{\dt}{2}L} \Big[u^{(1)} + \frac{\dt}{2} f(u^{(1)})\Big]
+ \frac{1}{3} \e^{\frac{\dt}{2}L} u^{(2)}
+ \frac{1}{3} \Big[u^{(3)} + \frac{\dt}{2} f(u^{(3)})\Big] \nn \\
& = \e^{\dt L} u^n + \dt \Big(\frac{1}{6}\e^{\dt L}f(u^n) + \frac{1}{3}\e^{\frac{\dt}{2}L}f(u^{(1)})
+ \frac{1}{3}\e^{\frac{\dt}{2}L}f(u^{(2)}) + \frac{1}{6}f(u^{(3)})\Big).
\end{align}
\end{subequations}
Here, $\mathcal{C}=\dfrac{2}{3}$ and $\beta_{ij}$ are not all nonnegative.
Thus, the conditions \eqref{cond timestep} and \eqref{cond timestep2} yield
\begin{equation}
\label{cond_timestep_IFRK4}
\dt \le \frac{2\omega_0^*}{3},
\end{equation}
where $\omega_0^*=\min\{\omega_0^+,\omega_0^-\}>0$.
\end{scheme}

As illustrated in \cite{GoSh98},
the constraint of the nonnegativity of $\beta_{ij}$
leads to the nonexistence of four-stage, fourth-order SSP-IFRK schemes.
In our work, however, the
conditions \eqref{cond_f} and \eqref{cond_f2} relax the requirement of the nonnegative $\beta_{ij}$.
Therefore, the IFRK4 scheme \eqref{IFRK4} is adequate to preserve the MBP
without any needs of extra computations for the modification of $f$ as done in \cite{ShOs88}.
More MBP-preserving fourth-order IFRK schemes with larger numbers of stage
could be obtained by using, for example, eSSPRK$^+$(5,4) and eSSPRK$^+$(10,4) presented in \cite{IsGrGo18}.

Since $\omega_0^+$ and $\omega_0^-$ are completely determined by $f$ via \eqref{cond_f} and \eqref{cond_f2},
from the inequalities \eqref{cond_timestep_IF1}, %\eqref{cond_timestep_IFRK2},
\eqref{cond_timestep_IFRK3}, and \eqref{cond_timestep_IFRK4},
we find that the constraints on the time step sizes depend only on $f$ but not on the size of $L$.
This means that the choice of the time step sizes is independent of the spatial mesh size.
Moreover, we point out that
these constraints are all sufficient but not necessary conditions for the MBP-preserving property.

\subsection{Examples of the matrix $L$ and the function $f$}

A large number of examples of linear and nonlinear operators in \eqref{model_pde}
have been shown to satisfy the assumptions made in \cite{DuJuLiQi20review},
including the space-continuous and space-discrete cases,
and thus, we can check the matrix $L$ in \eqref{model_eq} in the same way.
Here, we present a more direct criterion for $L$ from the point of view of matrices.

\begin{lemma}[\cite{Dahlquist58,Soderlind06}]
\label{lem_expnorm}
For any matrix $A=(a_{ij})\in\R^{m\times m}$ and any constant $s\ge 0$, we have
\[
\|\e^{sA}\|_\infty \le \e^{s\mu_\infty(A)},
\]
where $\mu_\infty(A)$ is the logarithmic norm of $A$ with respect to the $\infty$-norm, i.e.,
\[
\mu_\infty(A) = \max_{1\le i\le m} \bigg(a_{ii} + \sum_{\substack{j=1\\ j\not=i}}^m|a_{ij}|\bigg).
\]
\end{lemma}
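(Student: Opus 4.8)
The plan is to exploit two elementary facts about the induced $\infty$-norm and the matrix exponential: the submultiplicativity $\|BC\|_\infty\le\|B\|_\infty\|C\|_\infty$, and the semigroup identity $\e^{sA}=(\e^{(s/n)A})^n$. Combining them, for any $n\in\N$ we obtain $\|\e^{sA}\|_\infty\le\|\e^{(s/n)A}\|_\infty^{\,n}$, so it suffices to estimate $\|\e^{hA}\|_\infty$ sharply for small $h>0$ and then pass to the limit $n\to\infty$ with $h=s/n$. The case $s=0$ is trivial, so I assume $s>0$ throughout.

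First I would establish the small-step estimate. Writing $\e^{hA}=I+hA+O(h^2)$ and using the explicit row-sum formula for the $\infty$-norm, I would compute, for $h>0$ small enough that $1+ha_{ii}>0$ for every $i$,
\[
\|I+hA\|_\infty=\max_{1\le i\le m}\Big(|1+ha_{ii}|+h\sum_{j\ne i}|a_{ij}|\Big)=1+h\max_{1\le i\le m}\Big(a_{ii}+\sum_{j\ne i}|a_{ij}|\Big)=1+h\mu_\infty(A).
\]
By the triangle inequality together with $\|\e^{hA}-I-hA\|_\infty\le\e^{h\|A\|_\infty}-1-h\|A\|_\infty=O(h^2)$, this yields $\|\e^{hA}\|_\infty\le 1+h\mu_\infty(A)+O(h^2)$. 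This identity, that the $\infty$-norm of $I+hA$ equals exactly $1+h\mu_\infty(A)$ once the diagonal perturbations stay positive, is the one computational point where the specific structure of $\mu_\infty$ enters, and it is the crux of the argument.

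Next I would feed this into the submultiplicative bound with $h=s/n$:
\[
\|\e^{sA}\|_\infty\le\Big\|\e^{(s/n)A}\Big\|_\infty^{\,n}\le\Big(1+\frac{s}{n}\mu_\infty(A)+O(n^{-2})\Big)^{n}.
\]
Taking logarithms and using $n\log(1+c/n+O(n^{-2}))=c+o(1)$ with $c=s\mu_\infty(A)$, the right-hand side tends to $\e^{s\mu_\infty(A)}$ as $n\to\infty$, which gives the claim. The only care needed here is to check that the $O(n^{-2})$ contribution vanishes after being raised to the $n$-th power, which is immediate from the logarithm expansion.

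I expect no serious obstacle, since the result is classical. An equivalent route would replace the discrete limit by a differential inequality: setting $g(s)=\|\e^{sA}x_0\|_\infty$ for fixed $x_0$, one bounds the upper Dini derivative by $D^+g(s)\le\mu_\infty(A)\,g(s)$ via the same small-step estimate applied to $\e^{hA}x_0=(I+hA)x_0+O(h^2)$, then invokes a Gronwall lemma for one-sided derivatives before taking the supremum over $\|x_0\|_\infty=1$. The submultiplicativity version above avoids the Dini-derivative machinery, and it is the one I would write up.
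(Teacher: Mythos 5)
Your argument is correct and complete. Note, however, that the paper does not prove this lemma at all: it is stated as a known result and attributed to Dahlquist and S\"oderlind, so there is no in-paper proof to compare against. What you have written is essentially the classical derivation of the logarithmic-norm bound. The two pillars are both sound: the exact identity $\|I+hA\|_\infty=1+h\mu_\infty(A)$ for $h$ small enough that the diagonal entries $1+ha_{ii}$ stay positive (this is precisely why the explicit row-sum formula for $\mu_\infty$ takes the form it does, with $a_{ii}$ entering without absolute value), and the remainder bound $\|\e^{hA}-I-hA\|_\infty\le \e^{h\|A\|_\infty}-1-h\|A\|_\infty=O(h^2)$ with a constant uniform for $h\in(0,s]$. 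Combining these with $\|\e^{sA}\|_\infty\le\|\e^{(s/n)A}\|_\infty^n$ and letting $n\to\infty$ is airtight, since the left-hand side is independent of $n$ and the right-hand side converges to $\e^{s\mu_\infty(A)}$; the sign of $\mu_\infty(A)$ is immaterial once $n$ is large. Your alternative sketch via the Dini derivative of $s\mapsto\|\e^{sA}x_0\|_\infty$ and a one-sided Gronwall inequality is the route usually taken in the cited references, but the discrete semigroup version you chose to write up is, if anything, more elementary. No gaps.
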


\begin{remark}
If $A$ is diagonally dominant with all diagonal entries negative,
then we have $\|\e^{sA}\|_\infty \le 1$ for any $s\ge0$ since $\mu_\infty(A)\le0$.
See the following example.
\end{remark}

\begin{example}
\rm If $L$ is given by the second-order central difference discretization of $\Delta$, i.e.,
\[
L=\frac{1}{h^2}
\begin{pmatrix}
-2 & 1 & ~ & ~ & c\\
1 & -2 & 1 \\
~ & \ddots & \ddots & \ddots \\
~ & ~ & 1 & -2 & 1 \\
c & ~ & ~ & 1 & -2
\end{pmatrix}
\quad\text{with $c=0$ or $c=1$},
\]
then $\mu_\infty(L)=0$ and $\mu_\infty(-L)=4/h^2$.
According to Lemma \ref{lem_expnorm}, we have
\[
\|\e^{\dt L}\|_\infty \le 1, \qquad \|\e^{-\dt L}\|_\infty \le \e^{\frac{4\dt}{h^2}}.
\]
Denoting by $I$ the identity matrix with the same size as $L$ and letting
\begin{equation}
\label{eg_L2}
L^{(2)} = I \otimes L + L \otimes I,
\end{equation}
we have $\mu_\infty(L^{(2)})=0$ and $\mu_\infty(-L^{(2)})=8/h^2$, and thus,
\begin{equation}
\label{eg_Lexp}
\|\e^{\dt L^{(2)}}\|_\infty \le 1, \qquad \|\e^{-\dt L^{(2)}}\|_\infty \le \e^{\frac{8\dt}{h^2}}.
\end{equation}
The three-dimensional case $L^{(3)} = I \otimes I \otimes L + I \otimes L \otimes I + L \otimes I \otimes I$ is quite similar.
\end{example}

To guarantee \eqref{cond_f} and  \eqref{cond_f2} for the nonlinear function $f$,
we actually have the following result.
The proof is straightforward, so we omit it.

\begin{proposition}
If there exists $\rho>0$ such that $f(\pm\rho)=0$
and $f$ is continuously differentiable and nonconstant on $[-\rho,\rho]$,
then \eqref{cond_f} and \eqref{cond_f2} hold respectively for
\[
\omega_0^+ = - \frac{1}{\min\limits_{|\xi|\le\rho}f'(\xi)} \quad \text{and} \quad
\omega_0^- = \frac{1}{\max\limits_{|\xi|\le\rho}f'(\xi)}.
\]
\end{proposition}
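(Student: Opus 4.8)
The plan is to treat the two conditions symmetrically and reduce each to a single monotonicity statement. Writing $m_- = \min_{|\xi|\le\rho} f'(\xi)$ and $M_+ = \max_{|\xi|\le\rho} f'(\xi)$, the first order of business is to confirm that the proposed thresholds $\omega_0^+ = -1/m_-$ and $\omega_0^- = 1/M_+$ are well defined and strictly positive. This is exactly where the \emph{nonconstancy} hypothesis must be used: if $f'\ge 0$ throughout $[-\rho,\rho]$, then $f$ is nondecreasing, which together with $f(-\rho)=f(\rho)=0$ would force $f\equiv 0$ on $[-\rho,\rho]$, contradicting nonconstancy; hence $m_-<0$ and $\omega_0^+$ is a positive finite number. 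Applying the same reasoning to $-f$ gives $M_+>0$, so $\omega_0^-$ is likewise positive. I regard this as the one genuinely substantive point of the argument — it is easy to overlook, yet without it the stated formulas would be meaningless (division by zero or a nonpositive threshold).

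For condition \eqref{cond_f}, I would fix $\omega\in(0,\omega_0^+]$ and study the auxiliary map $g(\xi)=\xi+\omega f(\xi)$ on $[-\rho,\rho]$. Differentiating gives $g'(\xi)=1+\omega f'(\xi)\ge 1+\omega m_- = 1-\omega/\omega_0^+\ge 0$, where the final inequality uses $\omega\le\omega_0^+$. Thus $g$ is nondecreasing, so $g(-\rho)\le g(\xi)\le g(\rho)$ for every $\xi\in[-\rho,\rho]$. Evaluating the endpoints with the hypothesis $f(\pm\rho)=0$ yields $g(-\rho)=-\rho$ and $g(\rho)=\rho$, whence $-\rho\le g(\xi)\le\rho$, i.e. $|\xi+\omega f(\xi)|\le\rho$ for all $\xi\in[-\rho,\rho]$. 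Since $\omega$ was an arbitrary element of $(0,\omega_0^+]$, this is precisely \eqref{cond_f}.

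Condition \eqref{cond_f2} then follows from the mirror-image computation with $h(\xi)=\xi-\omega f(\xi)$ for $\omega\in(0,\omega_0^-]$: here $h'(\xi)=1-\omega f'(\xi)\ge 1-\omega M_+ = 1-\omega/\omega_0^-\ge 0$, so $h$ is again nondecreasing, and the endpoint values $h(-\rho)=-\rho$, $h(\rho)=\rho$ deliver the bound $|\xi-\omega f(\xi)|\le\rho$. No estimate beyond the monotonicity of $g$ and $h$ and the vanishing of $f$ at $\pm\rho$ is required, which is exactly why the statement can reasonably be declared straightforward and its proof omitted; the only care needed is the positivity of the thresholds addressed in the first step.
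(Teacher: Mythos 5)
Your proof is correct, and it matches what the authors evidently had in mind: the paper itself omits the argument as ``straightforward,'' and the monotonicity-of-$g$-and-$h$ route combined with the endpoint values $f(\pm\rho)=0$ is the natural one. You also correctly isolate the one point that actually requires the nonconstancy hypothesis, namely that $\min_{|\xi|\le\rho}f'(\xi)<0$ and $\max_{|\xi|\le\rho}f'(\xi)>0$ so that $\omega_0^\pm$ are well defined and positive.
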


\begin{example}
\rm The Allen--Cahn equation has the nonlinear term $f(u)=u-u^3$,
which satisfies \eqref{cond_f} and \eqref{cond_f2}  with $\rho=1,\omega_0^+=\frac{1}{2}$, and $\omega_0^-=1$.
\end{example}

\begin{example}
\rm Given the Flory--Huggins potential function
\begin{equation}
\label{F_FloryHuggins}
F(u) = \frac{\theta}{2} [(1+u)\ln(1+u) + (1-u)\ln(1-u)] - \frac{\theta_c}{2}u^2,
\end{equation}
where $\theta$ and $\theta_c$ are two positive constants satisfying $\theta<\theta_c$.
We set $f(u)=-F'(u)$, namely,
\begin{equation}
\label{f_log}
f(u) = \frac{\theta}{2}\ln\frac{1-u}{1+u} + \theta_cu.
\end{equation}
Denote by $\gamma$ the positive root of $f(\gamma)=0$.
Noting that
\[
\max_{|\xi|\le\gamma}f'(\xi) = \theta_c - \theta > 0, \qquad
\min_{|\xi|\le\gamma}f'(\xi) = \theta_c - \frac{\theta}{1-\gamma^2} < 0,
\]
we know $f$ satisfies \eqref{cond_f} and \eqref{cond_f2} with $\rho=\gamma$,
$\omega_0^+=\frac{1-\gamma^2}{\theta-\theta_c(1-\gamma^2)}$, and $\omega_0^-=\frac{1}{\theta_c-\theta}$.
\end{example}

\section{Numerical experiments}
\label{sect_numexp}

Let us consider the two-dimensional  reaction-diffusion equation
\begin{equation}
\label{eg_AllenCahn}
u_t = \eps^2\Delta u + f(u), \quad (x,y)\in\Omega=(0,1)^2, \ t\in(0,T],
\end{equation}
subject to the periodic boundary condition,
where $f(u)$ takes the form \eqref{f_log} with $\theta=0.8$ and $\theta_c=1.6$.
The positive root $\gamma$ of $f(\gamma)=0$ is approximately $\gamma\approx0.9575$.
The energy functional corresponding to \eqref{eg_AllenCahn} is given by
\begin{equation}
\label{eg_energy}
E(u) = \int_{(0,1)^2} \Big( \frac{\eps^2}{2}|\nabla u(\bx)|^2 + F(u(\bx)) \Big) \, \d \bx,
\end{equation}
where $F(u)$ is the Flory--Huggins potential \eqref{F_FloryHuggins}.
In all experiments,
we always adopt the five-point central difference matrix \eqref{eg_L2} to approximate the Laplace operator
on the spatial uniform mesh with the size $h$ given later.
Since the approximating matrix is circulant,
the product of the matrix exponential and a vector is calculated via the fast Fourier transform.

\subsection{Convergence tests}

First, we test the convergence rates of the IFRK schemes \eqref{IFFE}, \eqref{IFRK2}, \eqref{IFRK3}, and \eqref{IFRK4}.
The initial data is set to be
\[
u_0(x,y) = 0.1 (\sin 3\pi x \sin 2\pi y + \sin 5\pi x \sin 5\pi y).
\]
We use the spatial mesh size $h=1/2048$.
For the cases $\eps=0.1$ and $\eps=0.01$,
we calculate the numerical solutions at $T=2$ with the time step sizes $\dt=2^{-k}$, $k=1,2,\dots,12$
and regard the solution obtained by IFRK4 with $\dt=0.1\times 2^{-12}$ as the benchmark
to compare the supremum-norm errors.
\figurename~\ref{fig_convergence} shows the results of the convergence tests
and the expected convergence rates are obvious.

\begin{figure}[h]
\centering
\subfigure[IF1 scheme]{\includegraphics[width=0.5\textwidth]{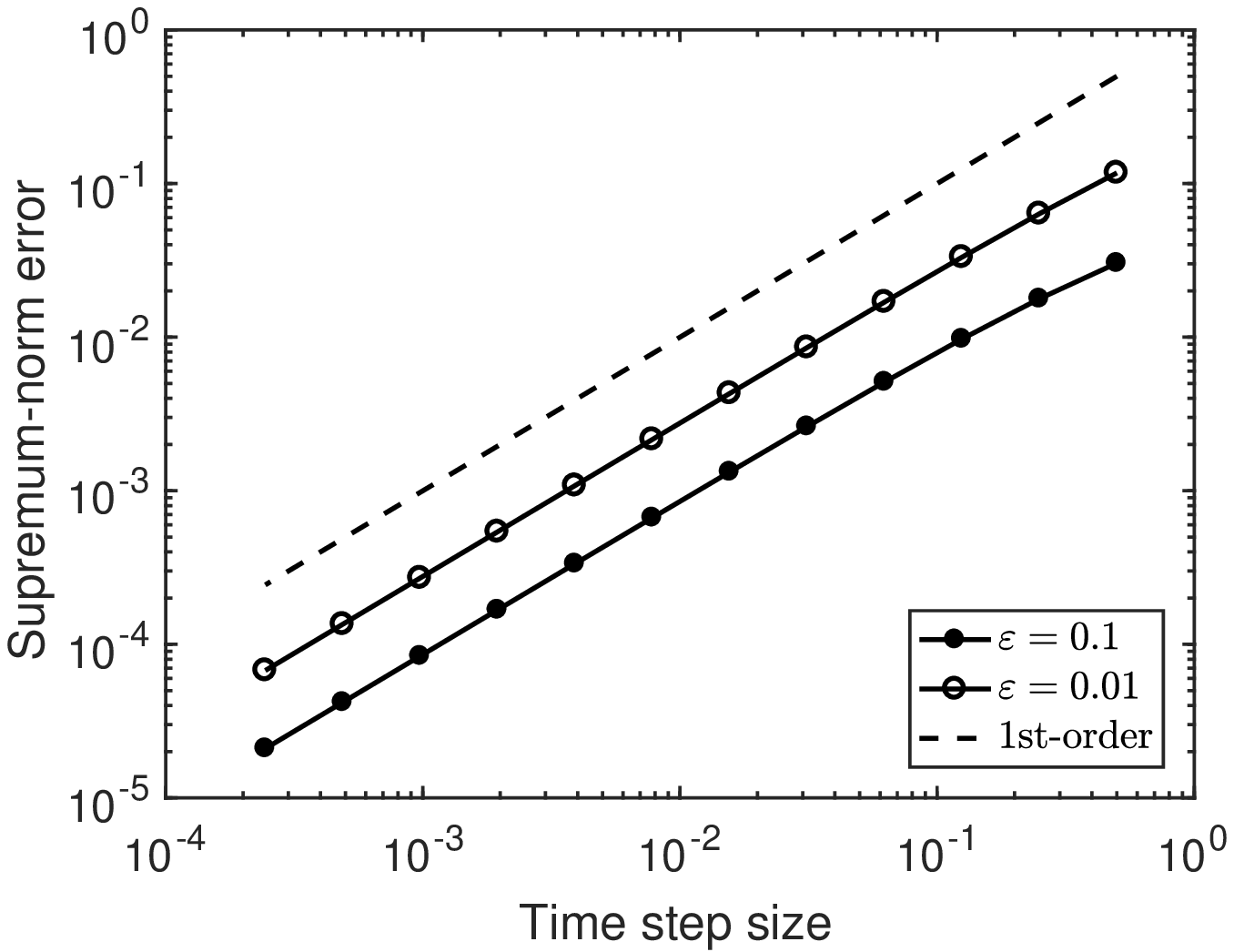}}%\hspace{-0.2cm}
\subfigure[IFRK2 scheme]{\includegraphics[width=0.5\textwidth]{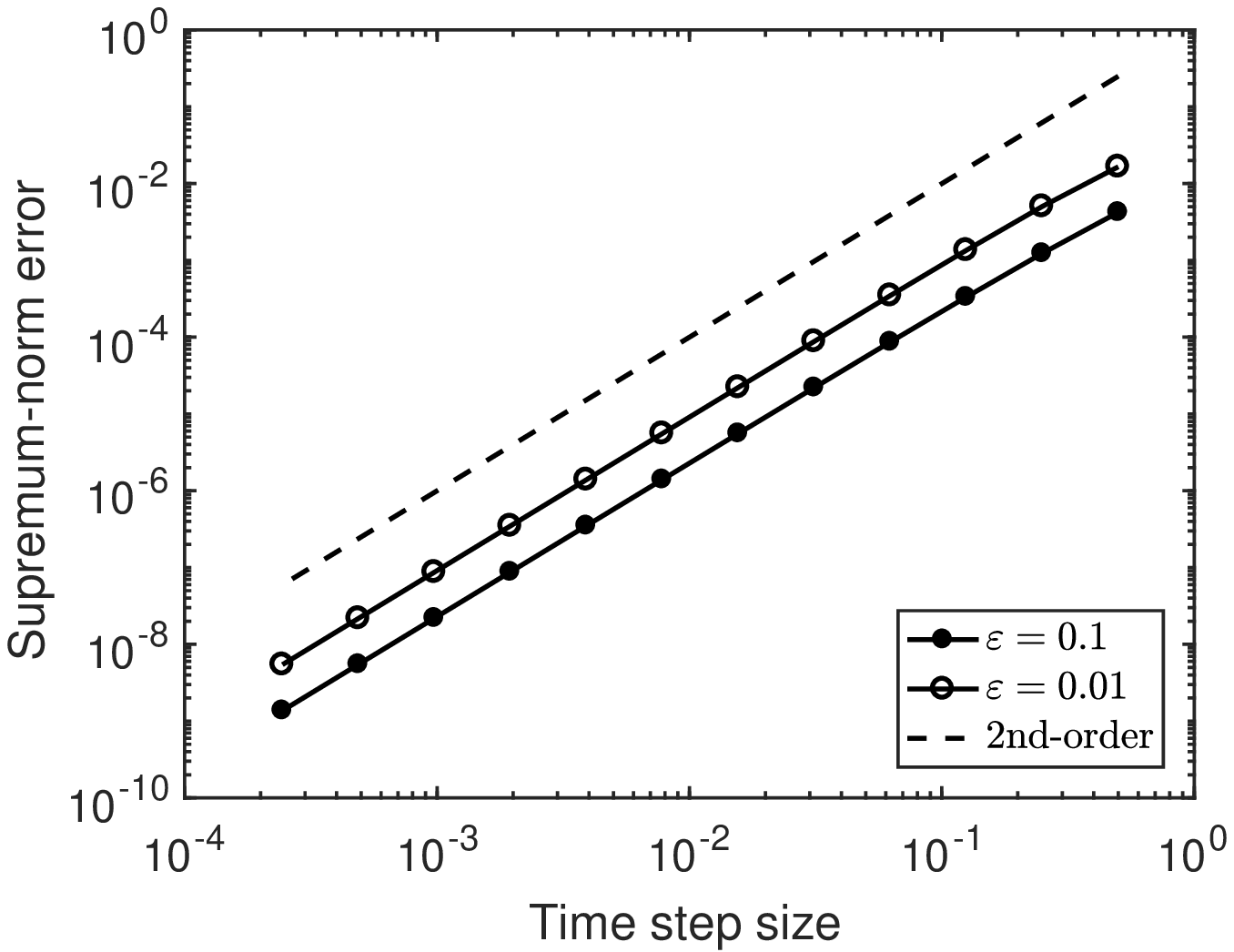}}\\
\subfigure[IFRK3 scheme]{\includegraphics[width=0.5\textwidth]{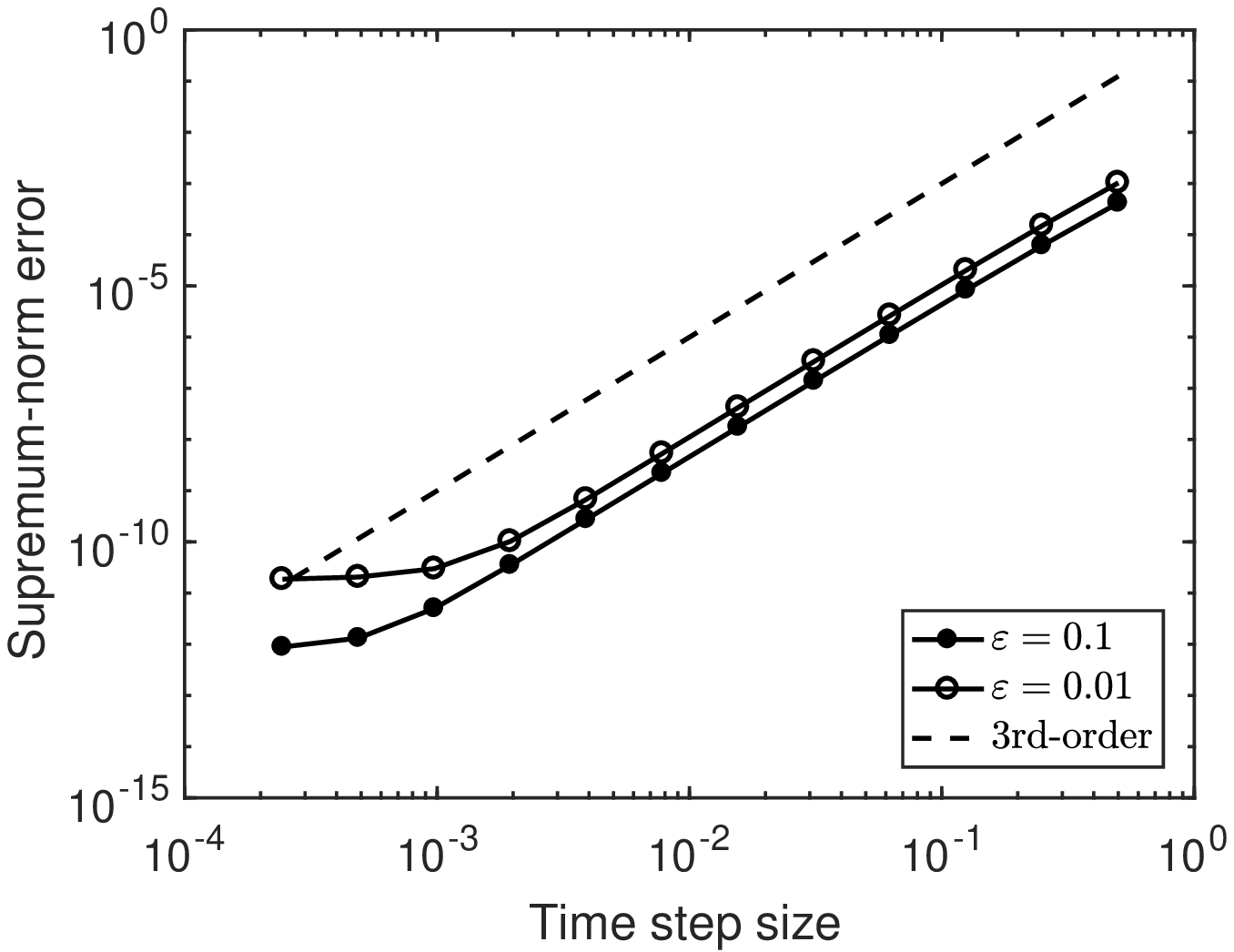}}%\hspace{-0.2cm}
\subfigure[IFRK4 scheme]{\includegraphics[width=0.5\textwidth]{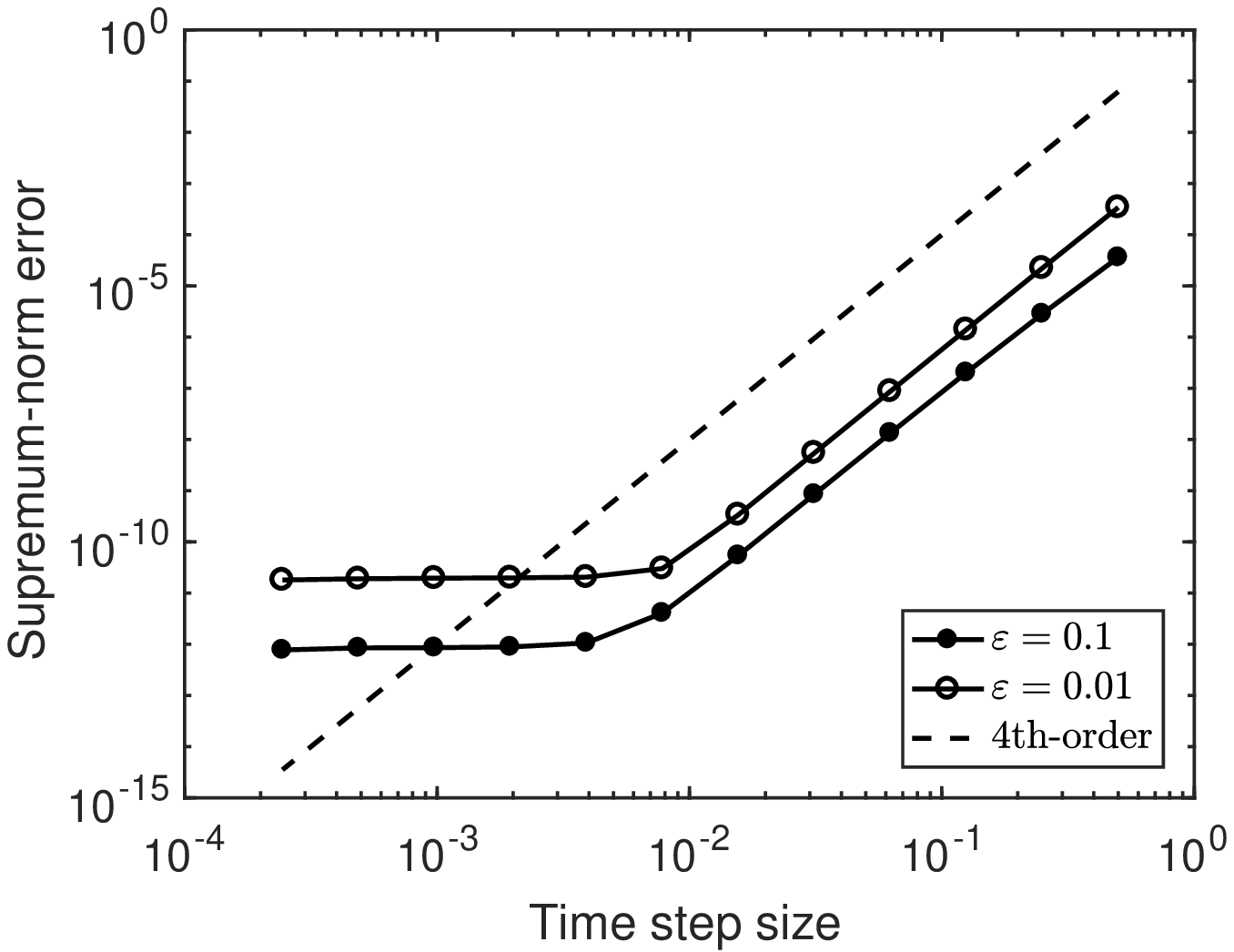}}
\caption{Convergence rates of the IFRK schemes.}
\label{fig_convergence}
\end{figure}

\subsection{Tests for MBP preservation}

Then, we simulate the process of the coarsening dynamics
by setting $\eps=0.1$ and the spatial mesh size $h=1/512$,
where the initial data is given by random numbers on each mesh point ranging from $-0.8$ to $0.8$.

Wef irst use the IFRK schemes \eqref{IFFE}, \eqref{IFRK2}, \eqref{IFRK3}, and \eqref{IFRK4}
with the uniform time step size $\dt=0.08$,
close to the upper bound of the time step sizes determined by \eqref{cond_timestep_IFRK4} for the IFRK4 scheme.
\figurename~\ref{fig_coarsen11} plots the evolutions of the supremum norm of the solution of the four schemes.
The red dash horizonal line shows the theoretical upper bound $\gamma$ of the numerical solutions
and the black solid curve gives a benchmark obtained by using the IFRK4 scheme with $\dt=0.001$.
It can be observed that
the supremum norms of the numerical solutions are always bounded by the theoretical value,
{and more precisely, the values of vertical coordinate of every curve do not exceed $\gamma$,}
which suggests the preservation of the MBP.
In addition, the curves corresponding to the IF1 and IFRK2 schemes
produce obvious deviations from the benchmark due to the low accuracy,
while there is little difference between the curves for the IFRK3 and IFRK4 schemes and that for the benchmark.
This  shows the convergence of the four IFRK schemes and the benefit of high-order accurate schemes.

\begin{figure}[h]
\centering
\includegraphics[width=0.5\textwidth]{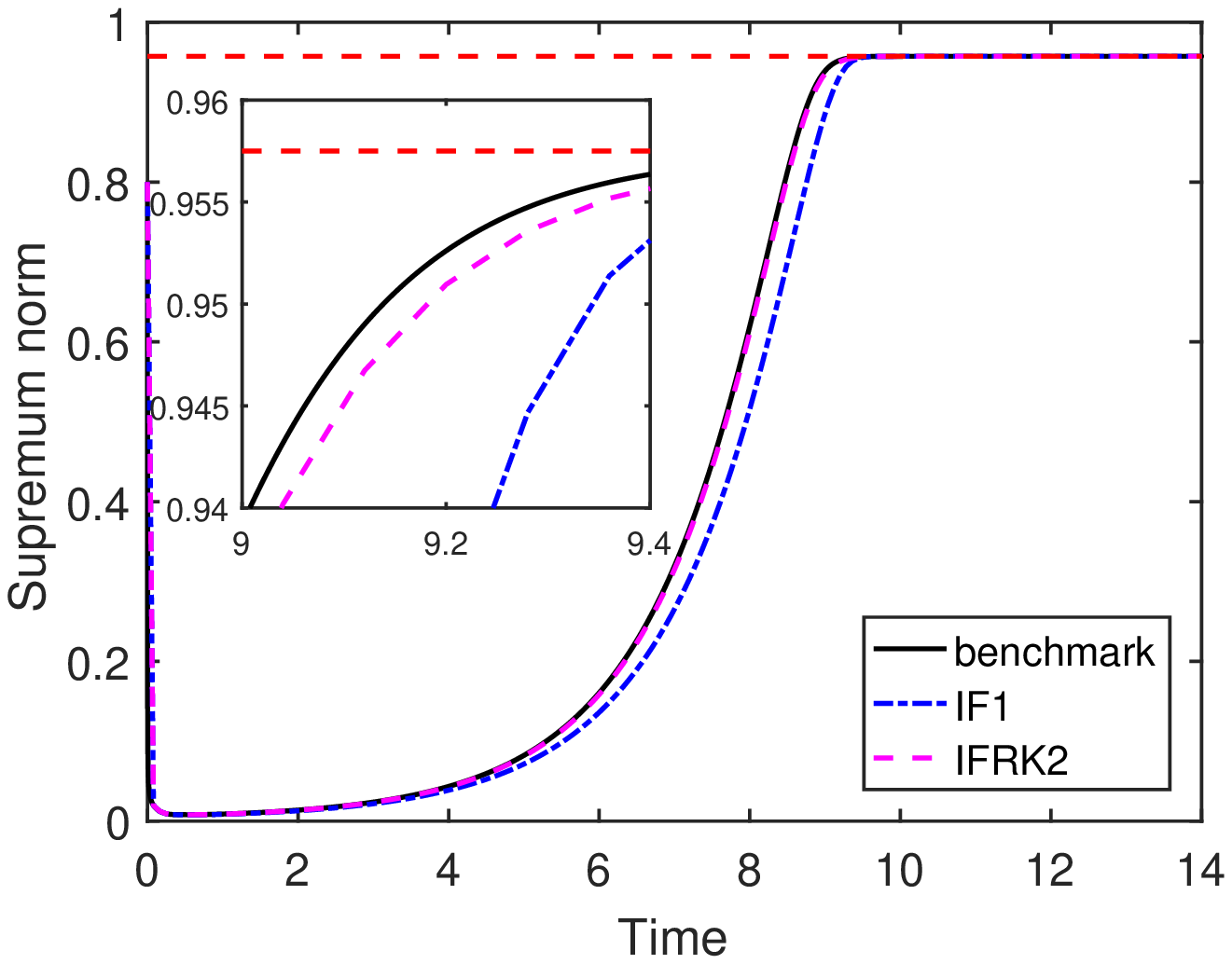}%\hspace{-0.2cm}
\includegraphics[width=0.5\textwidth]{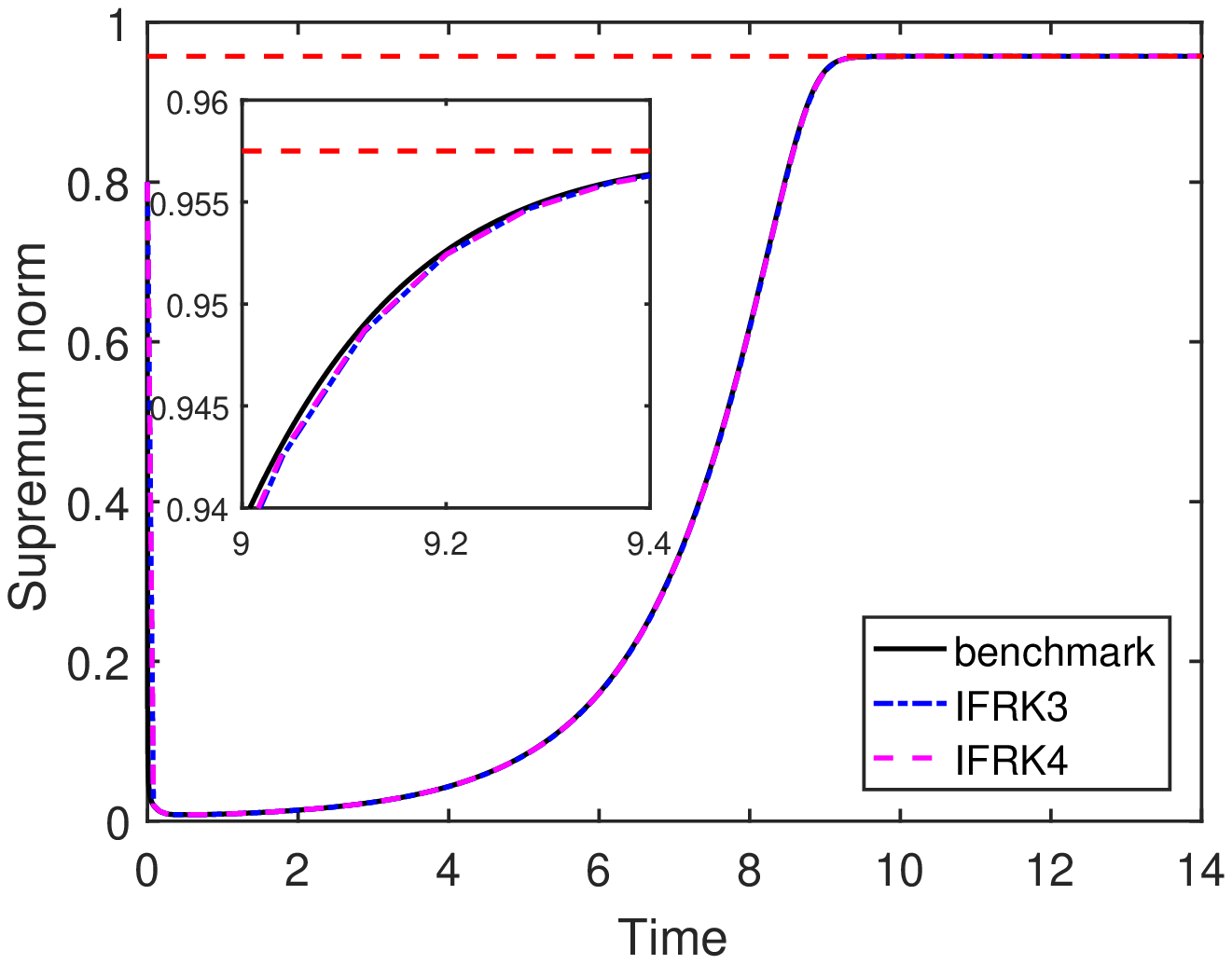}
\caption{Evolutions of the supremum norms of the solutions of IFRK schemes.}
\label{fig_coarsen11}
\end{figure}

As we mentioned in Remark \ref{rmk_ShuOsher},
the third-order IF Shu--Osher scheme \eqref{IFRK3n} may not preserve the maximum bound principle
due to the existence of the negative abscissas in the matrix exponential.
Here, we simulate the coarsening dynamics described above
by using the scheme \eqref{IFRK3n} with a small time step size $\dt=0.005$
to explore the behavior of the numerical solution.
The left graph in \figurename~\ref{fig_coarsen12} shows the evolution of the supremum norm till $t=5$.
We can see the supremum norm of the numerical solution (the solid line)
exceeds the theoretical upper bound (the dash line) around $t=4.8$
and even evolves larger than $1$ after $t=4.9$.
Note that there is a logarithmic term in the nonlinear part
and it will be evaluated by complex numbers if $u$ ranges out of the interval $(-1,1)$.
The right grap in \figurename~\ref{fig_coarsen12} shows that
the energy \eqref{eg_energy} decreases along the time
until a wrong sharp corner arises around $t=4.9$.
We see that the simulation will give a completely wrong result
even though a small time step size is adopted for the scheme \eqref{IFRK3n},
which suggests the necessity of the property of non-decreasing abscissas.
Actually, we also repeat the experiment by using a smaller time step size $0.004$
and obtain the correct result similar to that shown in \figurename~\ref{fig_coarsen11}.
Thus we guess the scheme \eqref{IFRK3n} with the time step size $\dt\le0.004$
could give the correct numerical solutions.
Moreover, according to the second inequality of \eqref{eg_Lexp},
we further guess the scheme \eqref{IFRK3n} may preserve the MBP
when $\dt\le Ch^2$ for some constant $C$,
though we do not have the theoretical proof.

\begin{figure}[h]
\centering
\includegraphics[width=0.5\textwidth]{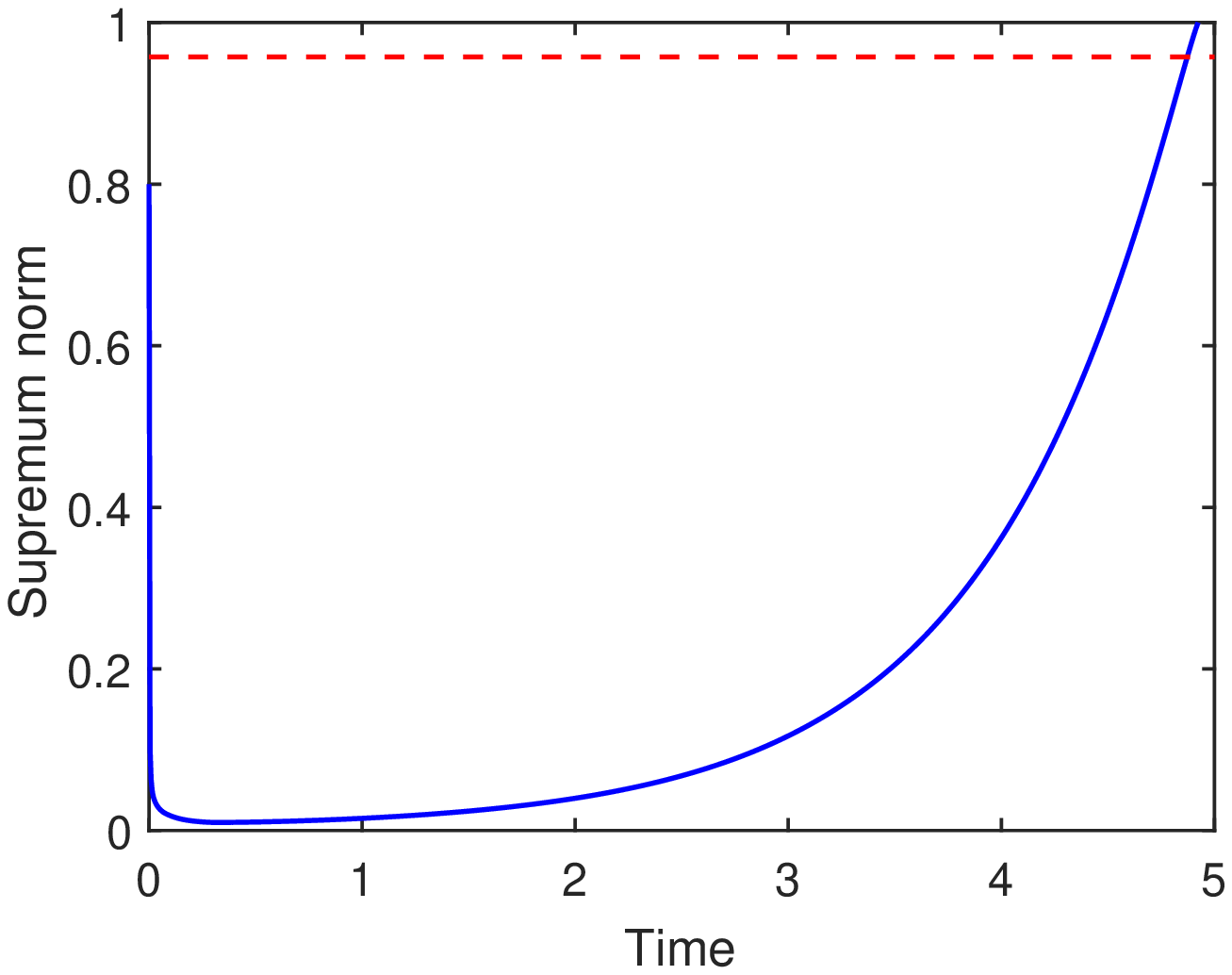}%\hspace{-0.2cm}
\includegraphics[width=0.5\textwidth]{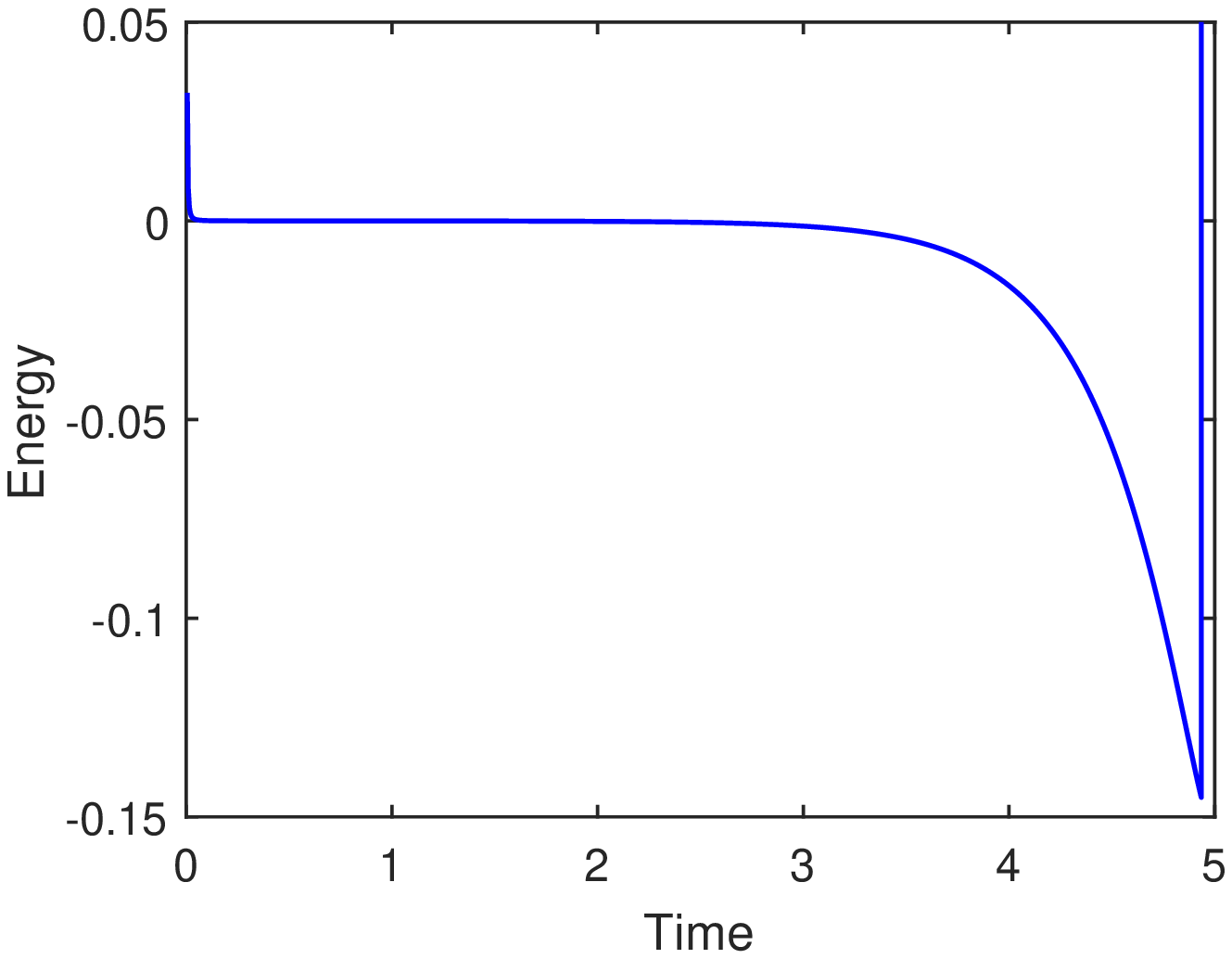}
\caption{Evolutions of the supremum norm (left) and the energy (right)
of the solution of the third-order IF Shu--Osher scheme \eqref{IFRK3n}.}
\label{fig_coarsen12}
\end{figure}

\subsection{Efficiency comparison for long-time simulations}

One may conclude from \figurename~\ref{fig_convergence}-(b) and (d) that
the numerical error of the IFRK2 scheme with the time step size $0.001$
has the same magnitude as the error of the IFRK4 scheme with the time step size about $0.08$.
Thus, we repeat the simulation of the above coarsening dynamics with $\eps=0.01$
by adopting the IFRK2 scheme \eqref{IFRK2} with $\tau=0.001$ and the IFRK4 scheme \eqref{IFRK4} with $\tau=0.08$
to compare the efficiencies of these two schemes with the accuracy at the same level.
The terminal time of the simulation is set to be $T=610$.

The computations are carried out in MATLAB
on a Laptop with a four-core Intel 2.70~GHz Processor and 8~GB Memory.
The CPU time for the computation by the IFRK2 scheme is about $352.28$~minutes
and that for the IFRK4 scheme is around $13.96$~minutes,
approximately $3.96\%$ of the former.
This implies the higher efficiency of the IFRK4 scheme than that of the IFRK2 one.
The numerical results of the IFRK4 scheme are shown in the following pictures (the results by the IFRK2 scheme are almost identical).
\figurename~\ref{fig_coarsen21} shows the configurations of the solution at $t=4,6,10,30,100$, and $300$.
The simulated dynamics begins with a random state
and towards the homogeneous steady state of constant $-\gamma$,
which is reached after about $t=600$ in our simulation.
The evolutions of the supremum norm and the energy are plotted in \figurename~\ref{fig_coarsen22}.
We observe that the energy decreases monotonically as expected
and the MBP is perfectly preserved
so that the solution is always located in the interval $[-\gamma,\gamma]$.

\begin{figure}[h]
\centerline{
\hspace{-0.4cm}
\includegraphics[width=0.37\textwidth]{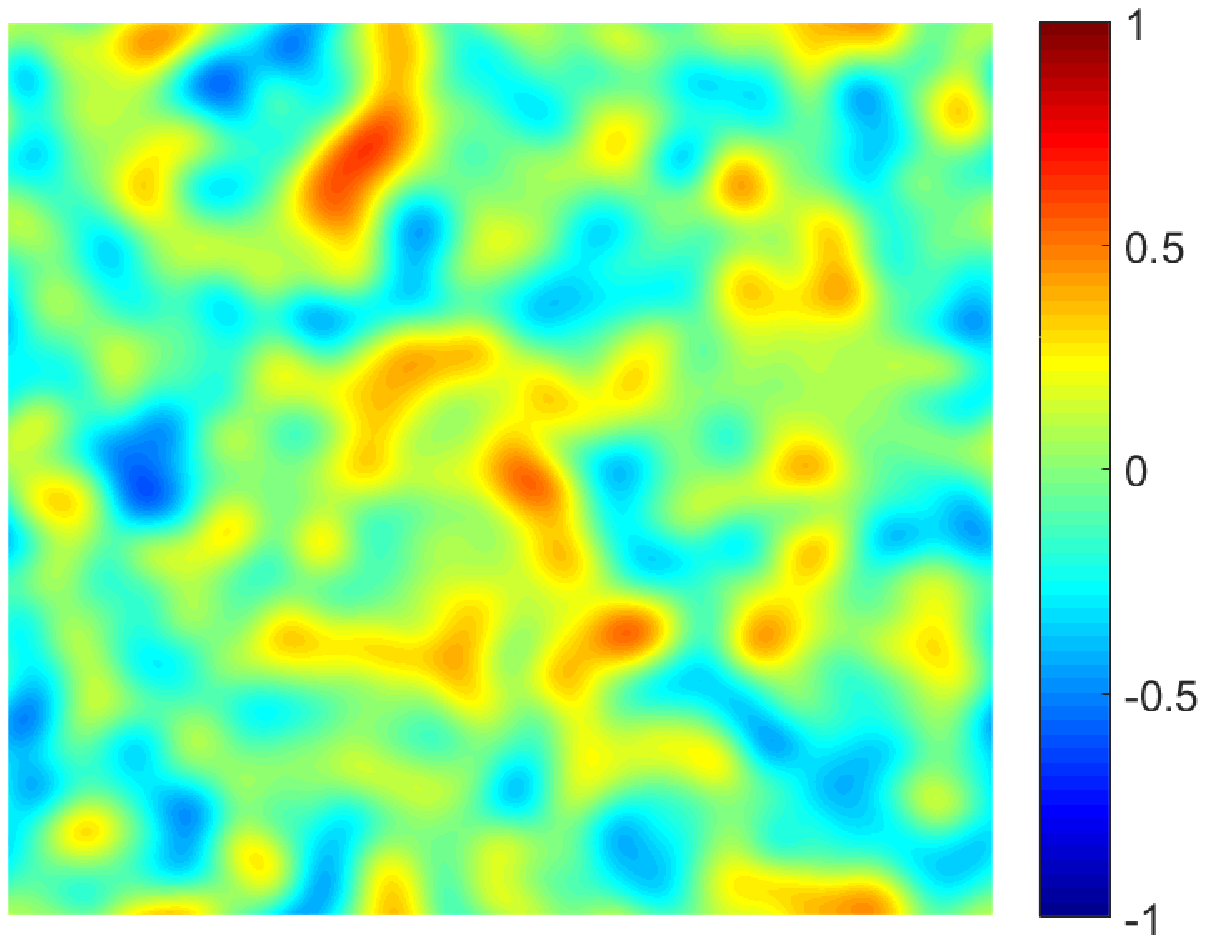}\hspace{-0.5cm}
\includegraphics[width=0.37\textwidth]{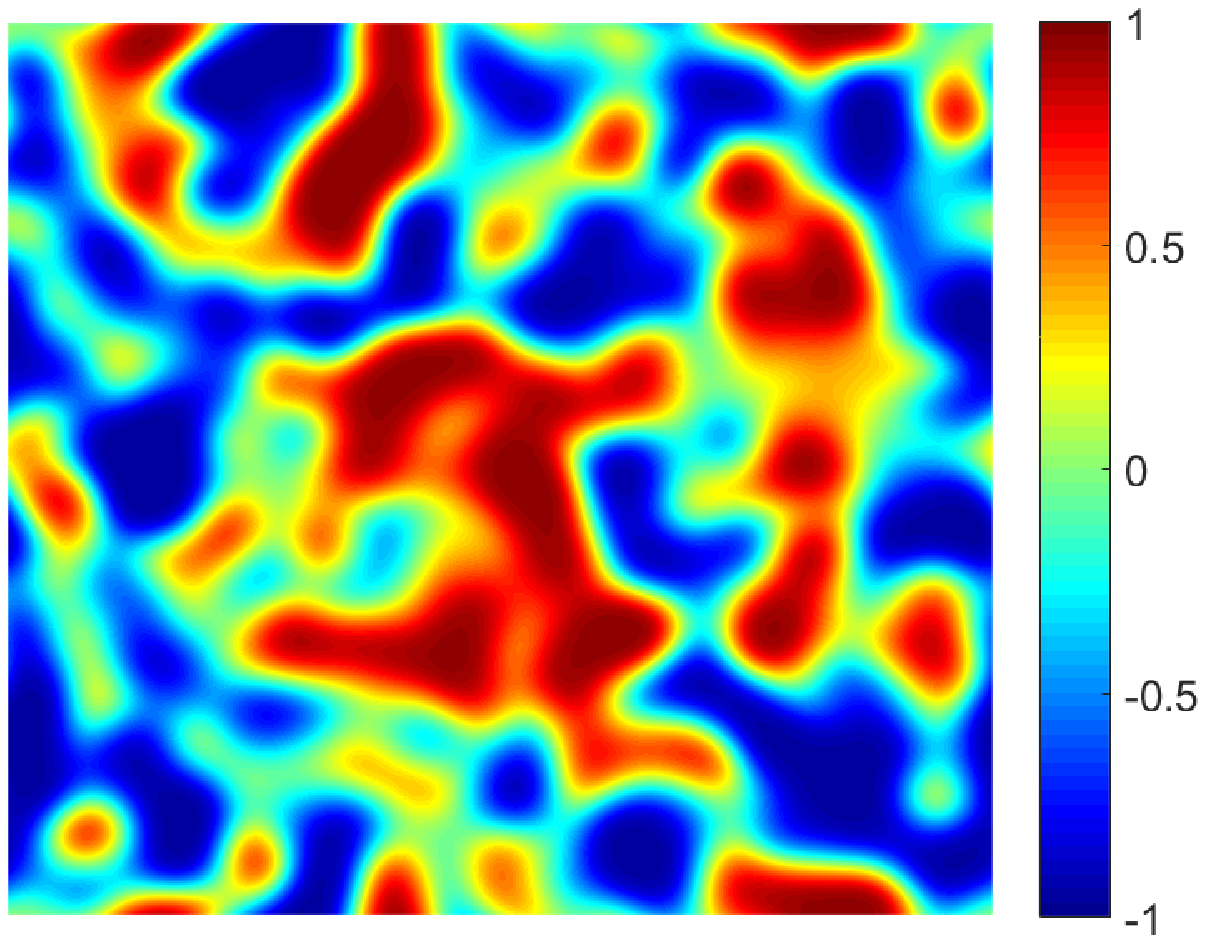}\hspace{-0.5cm}
\includegraphics[width=0.37\textwidth]{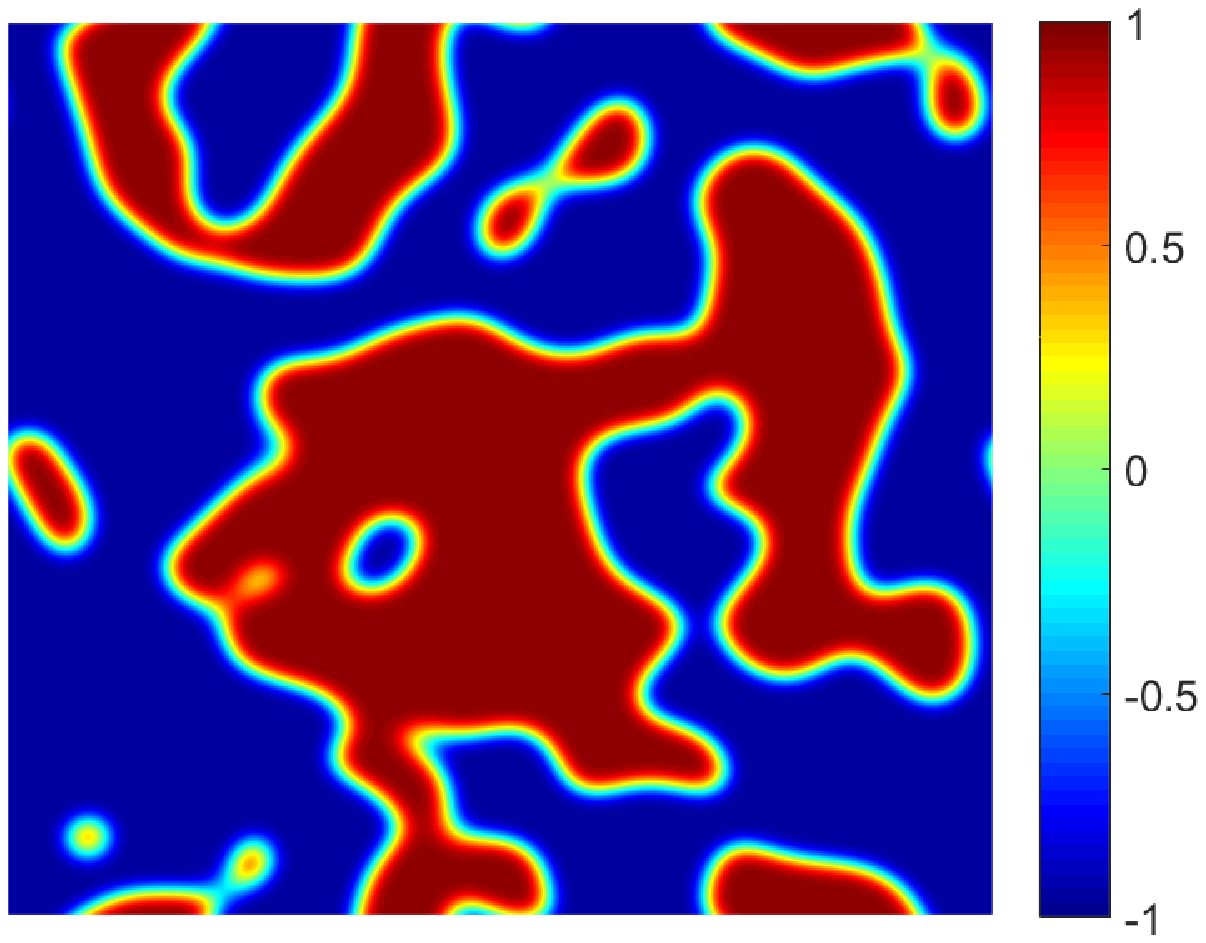}}
\centerline{\hspace{-0.4cm}
\includegraphics[width=0.37\textwidth]{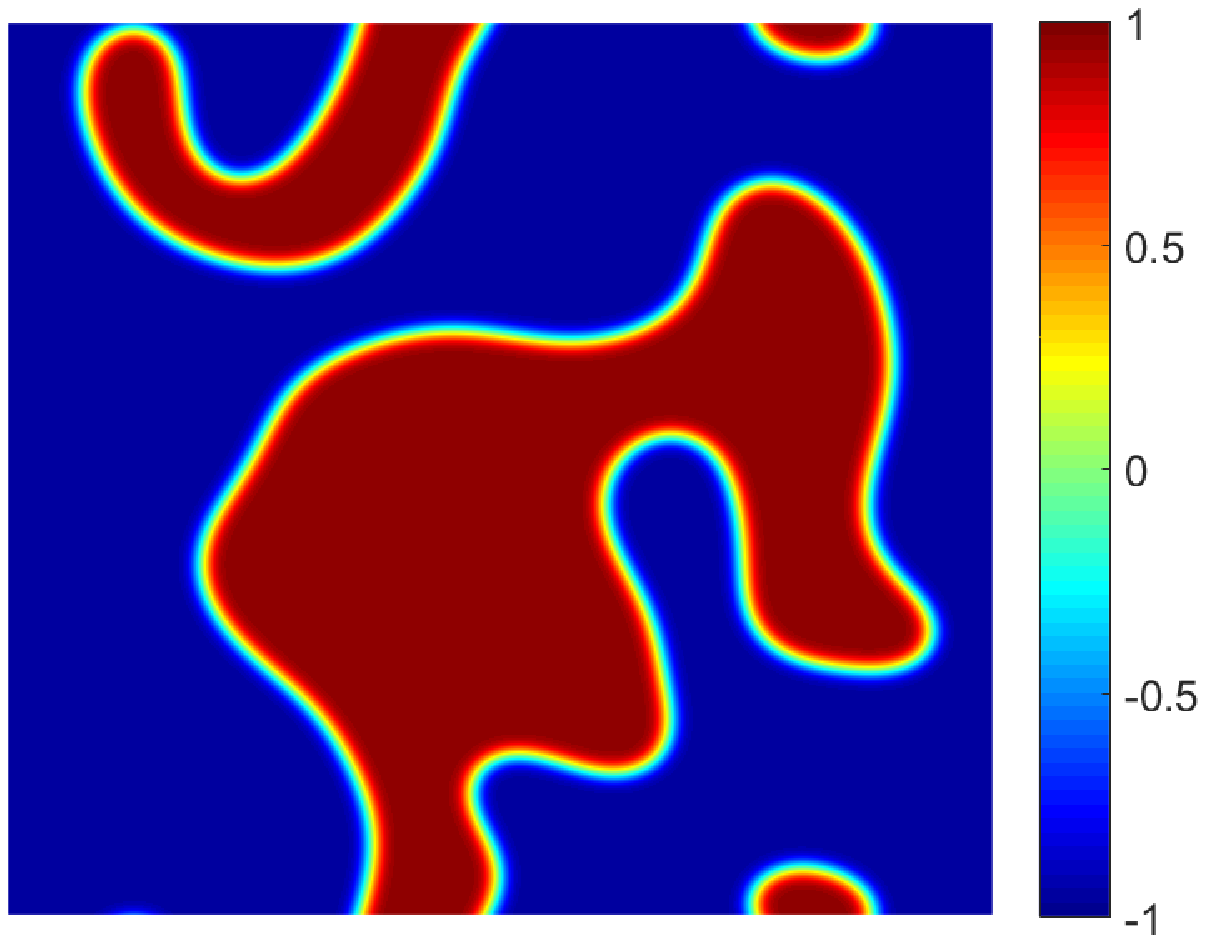}\hspace{-0.5cm}
\includegraphics[width=0.37\textwidth]{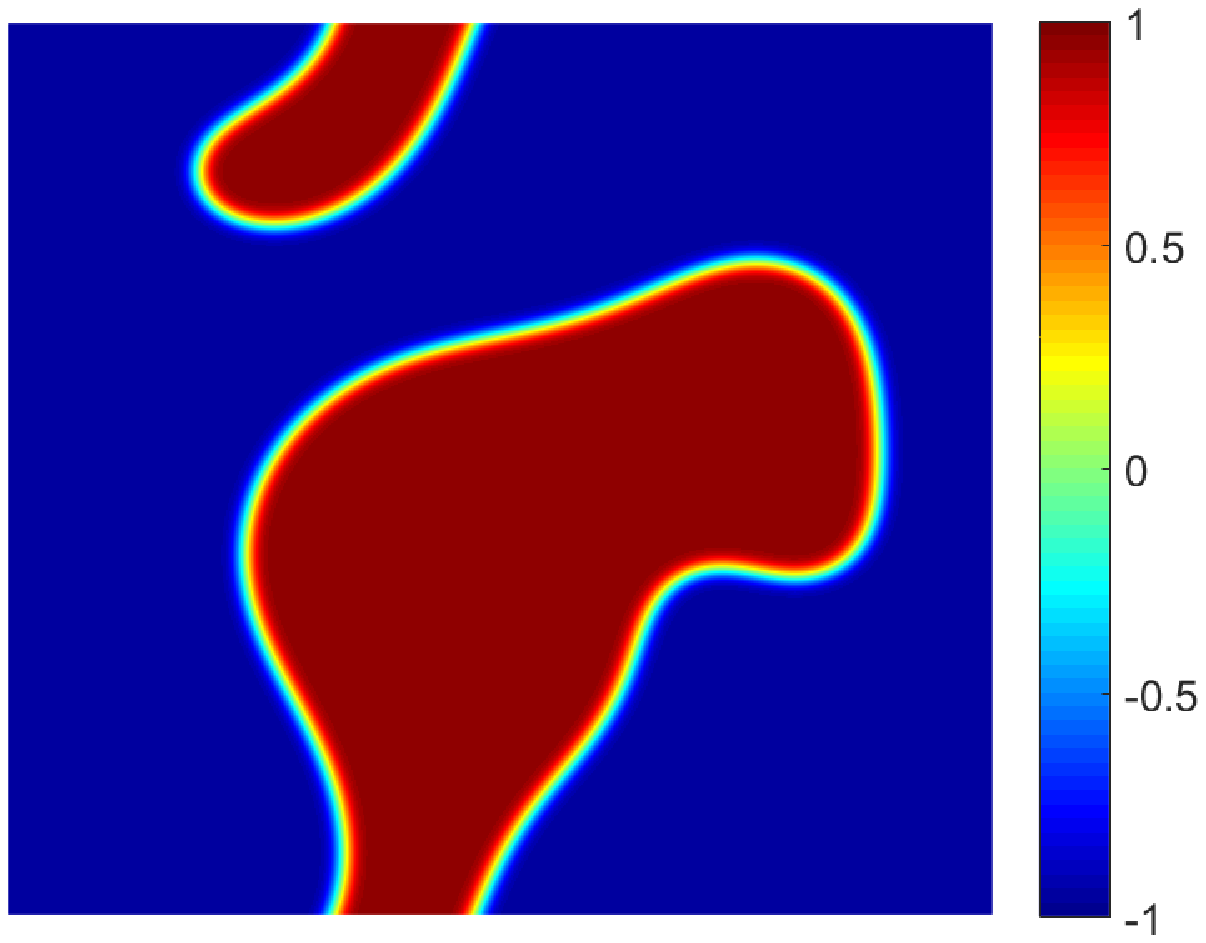}\hspace{-0.5cm}
\includegraphics[width=0.37\textwidth]{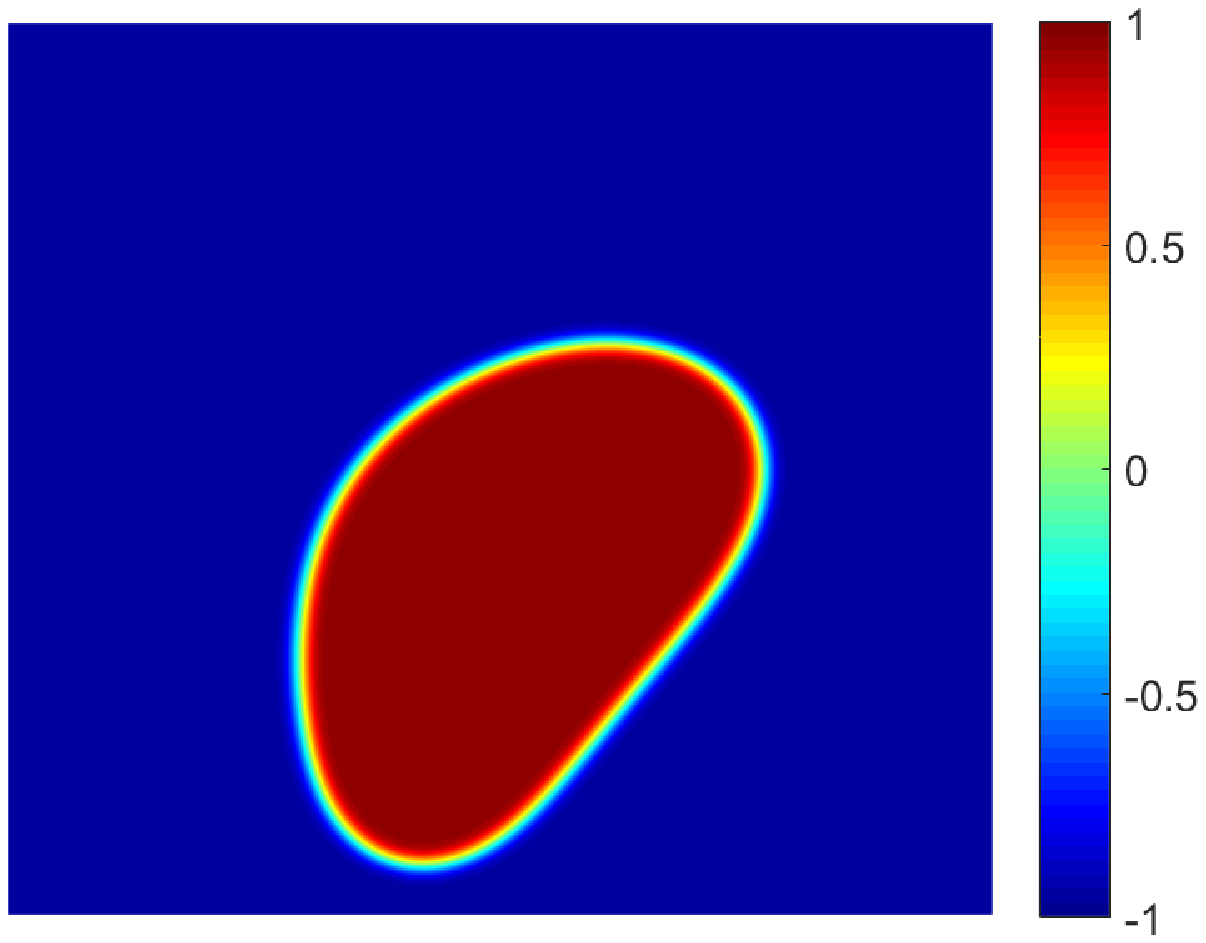}}
\caption{The snapshots of the evolution by the IFRK4 scheme at $t=4,6,10,30,100,300$, respectively
(left to right and top to bottom).}
\label{fig_coarsen21}
\end{figure}

\begin{figure}[h]
\centering
\includegraphics[width=0.5\textwidth]{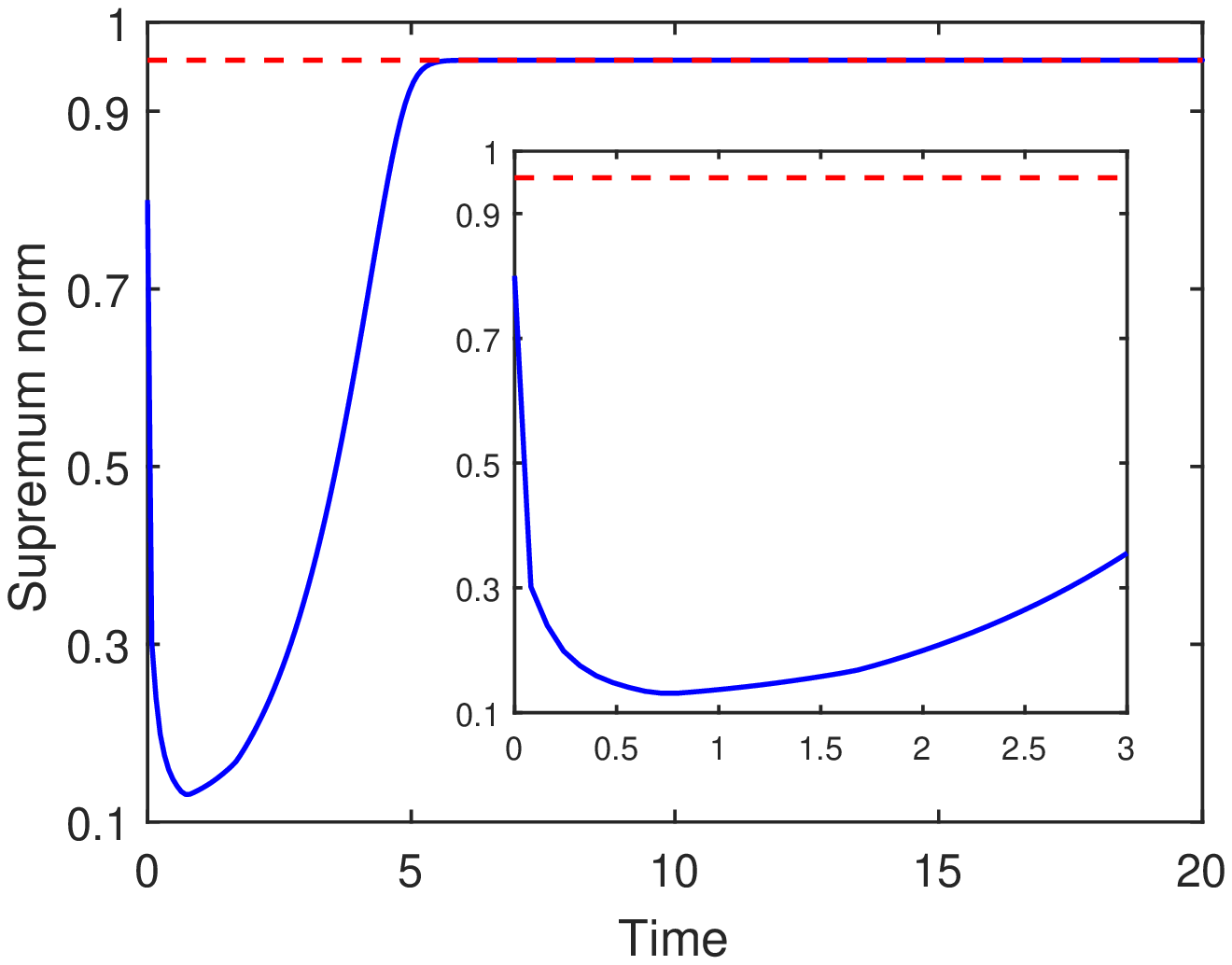}%\hspace{-0.2cm}
\includegraphics[width=0.5\textwidth]{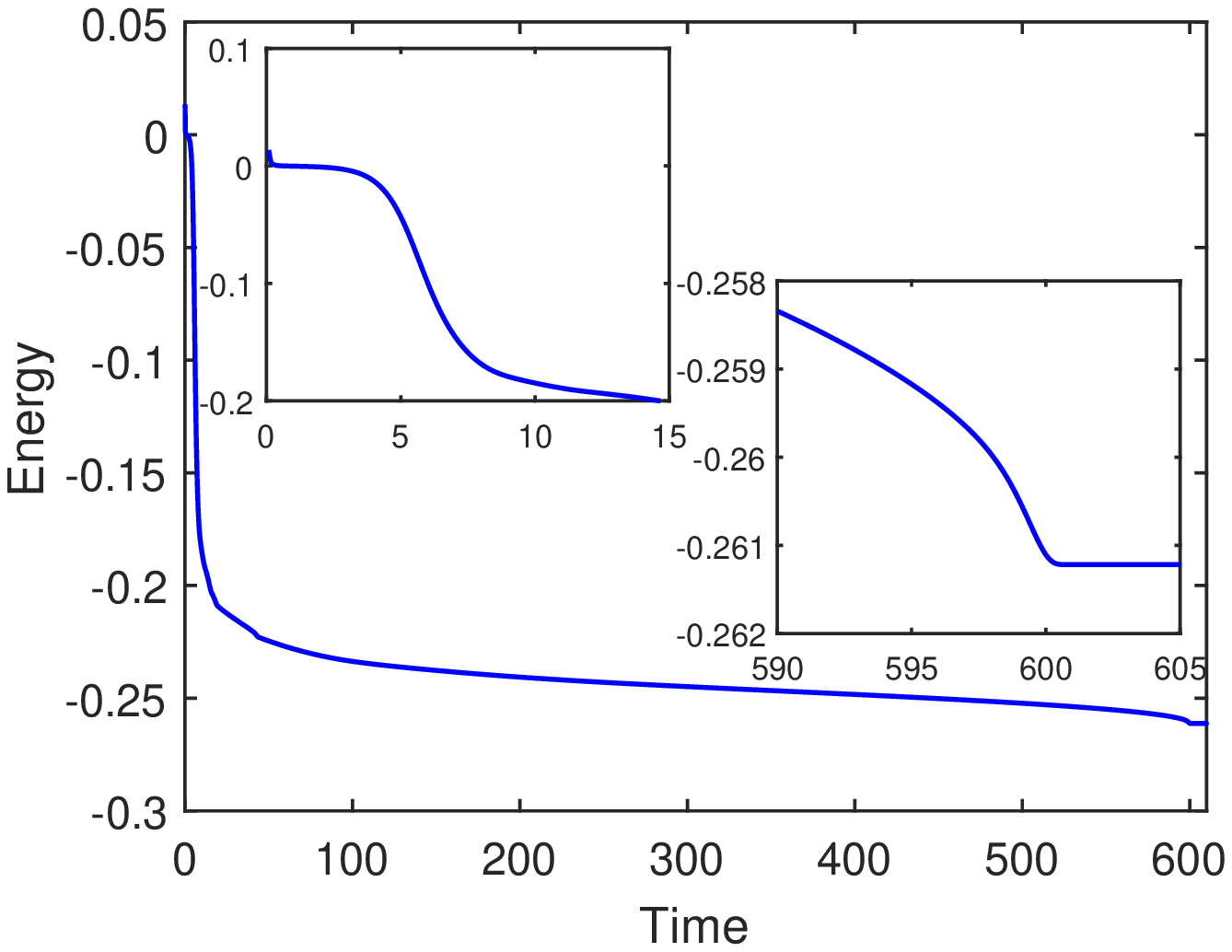}
\caption{Evolutions of the supremum norm (left) and the energy (right) by the IFRK4 scheme.}
\label{fig_coarsen22}
\end{figure}

\section{Concluding remarks}\label{sect_con}

In this work, we study the fully-discrete MBP-preserving IFRK method
for semilinear parabolic equations  taking the form \eqref{model_pde}.
We show that the IFRK method in general form preserves the MBP under certain conditions,
present several practical and specific IFRK schemes up to fourth order,
and give the convergence analysis of the method.
As shown in Theorem \ref{thm_MBP},
the constraints on the time step size do not depend on the linear part,
which is a significant advantage in comparison with the standard explicit SSP-RK methods where a CFL restriction is often needed.
The four-stage IFRK4 scheme \eqref{IFRK4} provides
a high-order MBP-preserving numerical scheme for the first time
and its high efficiency was verified by the numerical simulation of long-time evolutions.

There are some related problems worthy to explore further as continuation of this paper.
On one hand, the IF1, IFRK2, and IFRK3 schemes presented in Section \ref{sect_IFRKschemes}
actually come from the SSP-RK schemes in \cite{GoShTa01} for the system \eqref{model_eq} with $L=0$,
while the IFRK4 scheme comes from the classic fourth-order RK method with the inevitable negative $\beta_{ij}$.
Thanks to the conditions \eqref{cond_f} and \eqref{cond_f2} for the nonlinear function,
the nonnegativity constraint of $\beta_{ij}$ is not necessary in our framework.
Therefore, one of our future works is to explore
whether it is possible to find a fifth-order (or even higher-order) MBP-preserving IFRK scheme
by using similar routine as finding SSP schemes
without the requirement of the nonnegativity of $\beta_{ij}$.
On the other hand, the preservation of the MBP requires a constraint on the time step size
due to the conditions \eqref{cond_f} and \eqref{cond_f2}.
Thus, it is also expected to answer
whether one can use the stabilizing technique,
by adding a stabilization term as done in \cite{DuJuLiQi19,ShTaYa16},
to remove the requirement on the time step size.
In addition, the generalization to vector- and matrix-valued MBPs,
including the complex Ginzburg--Landau model \cite{DuGuPe92}
and orthogonal matrix-valued equations \cite{OsWa20} as the examples,
could be also considered as done in \cite{DuJuLiQi20review}.

\section*{Acknowledgments}

We are grateful to Professor Chi-Wang Shu of Brown University for many valuable comments.
This work is supported by the CAS AMSS-PolyU Joint Laboratory of Applied Mathematics.
L. Ju's work is partially supported by US National Science Foundation grant DMS-1818438
and US Department of Energy grant DE-SC0020270.
X. Li's work is partially supported by National Natural Science Foundation of China grant 11801024.
Z. Qiao's work is partially supported by the Hong Kong Research Council GRF grants 15300417 and 15302919
and the Hong Kong Polytechnic University fund G-UAEY.
J. Yang's work is supported by National Natural Science Foundation of China grant 11871264,
Natural Science Foundation of Guangdong Province (2018A0303130123),
and NSFC/Hong Kong RGC Joint Research Scheme (NFSC/RGC 11961160718).

\section*{References}

\bibliographystyle{elsarticle-num}

\end{document}